\newtheorem{theorem}{Theorem}[section]
\theoremstyle{definition}
\newtheorem{definition}{Definition}[section]
\newtheorem{corollary}{Corollary}[theorem]
\newtheorem{lemma}[theorem]{Lemma}
\newtheorem*{remark}{Remark}
\title{Enumeration and Distribution of Permutation Rows and Columns in Equi-$n$-Squares}
\author[1]{Andrew Pendleton\thanks{ajpendleton.math@gmail.com}}
\affil[1]{Department of Mathematical Sciences, Rutgers University--Camden, Camden, NJ, United States}
\date{January 2026}
\begin{document}

\maketitle

\begin{abstract}
    We introduce consecutive equi-$n$-squares, a variant of equi-$n$-squares in which at least one row or column forms a fixed permutation of $\{1,\dots,n\}$, taken for concreteness to be $(1,\dots,n)$. More generally, the enumeration and probabilistic arguments presented here extend to the occurrence of any prescribed permutation as a row or column of an equi-$n$-square. We derive exact and asymptotic formulas for the number of consecutive equi-$n$-squares, showing precisely how their proportion among all equi-$n$-squares rapidly approaches zero as $n\to\infty$.
    
    We also analyze the distribution of consecutive equi-$n$-squares under uniform random sampling and explore connections to algebraic structures, interpreting equi-$n$-squares and consecutive equi-$n$-squares as Cayley tables. Finally, we supplement our theoretical results with Monte Carlo simulations for small values of $n$.
\end{abstract}

\section{Introduction} \label{intro}

\begin{definition}[Latin Square]
    A \textit{Latin square} of order $n$ is an $n$ by $n$ array with entries from the multiset \[\{1,1,\dots, 1,2,2,\dots ,n-1,n,n,\dots, n\}\] where each element has a multiplicity of $n$ and each element appears exactly once in each row and column.
\end{definition}

For example, consider this Latin square of order 5:
\[
\begin{array}{|c|c|c|c|c|}\hline
    1 & 2 & 3 & 4 & 5\\\hline
    5 & 1 & 2 & 3 & 4\\\hline
    4 & 5 & 1 & 2 & 3\\\hline
    3 & 4 & 5 & 1 & 2\\\hline
    2 & 3 & 4 & 5 & 1\\\hline
\end{array}.
\]

The study of Latin squares is ancient. West and South Asian numerologists and mathematicians---perhaps as early as the year 1000---took an interest in their supposed mystical properties and constructed many \cite{ComboAncMod}. The study of Latin squares in the Western mathematical tradition was initiated by Euler in his attempts to better understand the construction of magic squares and solve the thirty-six officers problem. 

Latin squares have since become objects of study apart from their connections to magic squares and Euler's officers problem. The study of Latin squares has since been connected to the study of quasigroups (see section \ref{algebraic interpretations}) \cite{SmallLSsQGsLs}, experimental design in statistics \cite{Bailey1996}, error correction \cite{10.5555/3179439.3179443, AHMED2022104676}, and cryptography \cite{SecretSharingSchemes, LSCryptoApps}.

Our study of Latin squares is inspired by the website \href{https://donotfindthefox.com/classic}{donotfindthefox.com}. This website hosts a game in which the player attempts to randomly place 5 ``F''s, 6 ``O''s, and 5 ``X''s on a 4 by 4 grid without spelling the word ``FOX'' in any direction---whether forwards or backwards---along any column, row, or diagonal. Of course, this is a game of complete chance, and one which is not that likely to be won in the player's first few attempts. 

The combinatorial flavor of this game is self-evident, but determining one's probability of success resists a very satisfying, simple solution. Furthermore, we will see that small changes in the order of the squares we study massively impact the distribution of their features. As with Euler's officers problem, Latin squares and their variants often produce surprising and counterintuitive results. This paper seeks to develop the theory of objects similar to the board in the ``FOXs in Boxes'' game. The classification we arrive at is rather thorough and may provide a window into the study of related objects (e.g. magic squares, MOLS, etc.). 

\begin{definition}[Equi-$n$-Square]
     An \textit{equi-$n$-square} is an $n$ by $n$ array with entries from the multiset \[\{1,1,\dots, 1,2,2,\dots ,n-1,n,n,\dots, n\}\] where each element has a multiplicity of $n$.
\end{definition} 

For example, consider this equi-$3$-square:
\[
\begin{array}{|c|c|c|}\hline
    1 & 2 & 1 \\\hline
    3 & 3 & 1 \\\hline
    3 & 2 & 2 \\\hline
\end{array}.
\]

Equi-$n$-squares are objects of active research. Most famously, Stein conjectured that in every equi-$n$-square there exists a transversal of size $n-1$ \cite{MR1130611,MR387083}. Recently, this conjecture has been disproven by Pokrovskiy and Sudakov \cite{Pokrovskiy2019-vb}. Furthermore, Aharoni, Berger, Koltar, and Ziv have shown the size of the largest transversal to be at least $2n/3$ \cite{Aharoni2017} and Chakraborti, Christoph, Hunter, Montgomery, and Petrov have shown that any equi-$n$-square has a transveral with size $(1-o(1))n$ \cite{Chakraborti2024-pc}.

Apart from Stein's Conjecture and related problems, connections have been found between the study of equi-$n$-squares and graph theory \cite{Akbari2020-fd}, computational complexity \cite{Vaughan2009-om}, and Sudoku \cite{Bailey01052008}.

\begin{definition}[Consecutive Equi-$n$-Square] \label{CPLS}
    An equi-$n$-square is called \textit{consecutive} if it has at least one consecutive or reverse-consecutive row or column. 
\end{definition}

For example, consider this consecutive equi-$4$-square with one consecutive row and two consecutive columns:
\[
\begin{array}{|c|c|c|c|}\hline
    1 & 2 & 3 & 4 \\\hline
    2 & 4 & 1 & 3 \\\hline
    3 & 3 & 2 & 2 \\\hline
    4 & 4 & 1 & 1 \\\hline
\end{array}.
\]

Henceforth, we will abbreviate ``equi-$n$-square'' as E$n$S and ``consecutive equi-$n$-square'' as CE$n$S. Let $\Omega_n$ be the set of all E$n$Ss and $\Sigma_n$ the set of all CE$n$Ss. We seek to determine the probability that a randomly chosen E$n$S, say $\omega\in\Omega_n$ is consecutive, that is to calculate $P(\omega\in\Sigma_n).$

It is important to note that none of the arguments here rely on choosing the consecutive/increasing permutation $(1,\dots,n)$. More generally, one may extend these arguments verbatim to enumerate the occurrence of any given permutation of a set of $n$ distinct elements in an equi-$n$-square. The framing of consecutive rows and columns composed of the symbols $1,\dots, n$ is intended to highlight the continuity of CE$n$Ss as a natural generalization of equi-$n$-squares and (in particular, reduced) Latin squares.

\begin{theorem} \label{main_theorem}
    The number of consecutive equi-$n$-squares (the number of elements in  $\Sigma_n$) is \[
    \begin{split}
        \left(\sum_{i=1}^{n}(-2)^{i+1}\binom{n}{i}\dfrac{(n^2-in)!}{(n-i)!^{n}}\right)-\dfrac{(4n)(n^2-2n+1)!}{(n-1)!(n-2)!^{n-1}}\\
        \qquad+\dfrac{8\lfloor n/2\rfloor(n^2-3n+2)!}{(n-2)!^2(n-3)!^{n-2}}-\dfrac{2\lfloor n/2\rfloor(n^2-4n+4)!}{(n-2)!^2(n-4)!^{n-2}}
    \end{split}
    \]
    and
    \[
    |\Sigma_n|\sim\frac{4n\left(n^{2}-n\right)!}{\left(n-1\right)!^{n}}.
    \]
\end{theorem}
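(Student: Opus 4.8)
The plan is to evaluate $|\Sigma_n|$ by inclusion--exclusion over $4n$ events: for each of the $n$ rows and $n$ columns, and each of the two ``special'' orderings $1,2,\dots,n$ and $n,\dots,2,1$, let the corresponding event be that this line carries exactly that ordering, and call it a \emph{row-event} or a \emph{column-event} accordingly. An E$n$S is consecutive exactly when it lies in the union of these $4n$ events, so $|\Sigma_n| = \sum_{\emptyset\neq S}(-1)^{|S|+1}|A_S|$, where $A_S$ is the set of E$n$Ss satisfying all events in $S$. Here $|A_S|=0$ unless $S$ assigns at most one ordering to each line, and whenever $S$ pins down $i$ \emph{distinct lines that are all rows} (or all columns) those lines use one copy of each symbol apiece, so the remaining $n^2-in$ cells must receive $n-i$ copies of each symbol and $|A_S| = (n^2-in)!/((n-i)!)^{n}$, independently of which lines and orderings were chosen. (Taking $S=\emptyset$, i.e.\ $i=0$, already gives $|\Omega_n| = (n^2)!/(n!)^n$.)

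I would first handle the \emph{pure} subsets, containing only row-events or only column-events. There are $\binom{n}{i}2^{i}$ pure-row subsets pinning $i$ rows, each contributing $(-1)^{i+1}(n^2-in)!/((n-i)!)^{n}$; doubling for columns and using $(-1)^{i+1}2^{i+1}=(-2)^{i+1}$ recovers the main sum $\sum_{i=1}^{n}(-2)^{i+1}\binom{n}{i}(n^2-in)!/((n-i)!)^{n}$. The crux is a structural lemma for the \emph{mixed} subsets (at least one row-event and one column-event): a row and a column carrying special orderings share a single cell, on which they agree only if $c=r$ or $c+r=n+1$, and then the ordering of one line forces that of the other. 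Consequently a mixed $S$ with $|A_S|\neq0$ involves at most two rows and at most two columns; if it involves two rows these must be a complementary pair $\{r,\,n+1-r\}$ with $r\neq n+1-r$ (likewise for two columns), and any column present lies in $\{r,\,n+1-r\}$. Exactly three shapes of $S$ survive --- $(1$ row$,1$ col$)$, $(2,1)$ with its mirror $(1,2)$, and $(2,2)$ --- and a short tally of occupied cells and leftover symbol multiplicities (the number of free cells being $(n-1)^2$, $(n-1)(n-2)$, and $(n-2)^2$ in the three cases) gives $|A_S|$ equal to $(n^2-2n+1)!/((n-1)!\,(n-2)!^{\,n-1})$, $(n^2-3n+2)!/((n-2)!^{\,2}(n-3)!^{\,n-2})$, and $(n^2-4n+4)!/((n-2)!^{\,2}(n-4)!^{\,n-2})$ respectively, with $(-1)^{|S|+1}$ equal to $-1$, $+1$, $-1$.

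It then remains to count the configurations of each shape. For $(1,1)$ each of the $4$ sign patterns pins $n$ positions (those with $c=r$ or $c+r=n+1$), giving $4n$ subsets. For $(2,1)$ there are $\lfloor n/2\rfloor$ complementary row-pairs, $2$ choices of which element is the column, and $2$ remaining free sign choices (the rest being forced), i.e.\ $4\lfloor n/2\rfloor$ subsets; with the $4\lfloor n/2\rfloor$ mirror subsets of shape $(1,2)$ this is $8\lfloor n/2\rfloor$. For $(2,2)$ the column-pair is forced to equal the row-pair, leaving one binary sign choice, so $2\lfloor n/2\rfloor$ subsets. Multiplying each count by the corresponding $|A_S|$ and sign and adding to the main sum reproduces the stated closed form (valid once $n$ is large enough that all the factorials are defined; the finitely many small cases can be verified directly).

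For the asymptotics, write the main sum as $2(T_1-T_2+T_3-\cdots)$ with $T_i := 2^{i}\binom{n}{i}(n^2-in)!/((n-i)!)^{n}$, so $T_1 = 2n\,(n^2-n)!/((n-1)!)^{n}$ and $2T_1$ is the claimed leading term. A termwise comparison of the factorials involved yields $T_i/T_1 \le (2n/(n-i)^{n})^{\,i-1}$ for $2\le i\le n-1$, which is at most $2n/2^{n}$ once $i\le n-2$, so $\sum_{2\le i\le n-2}T_i=o(T_1)$; the two tail terms are bounded directly ($T_n=2^{n}$ and $T_{n-1}=2^{n-1}n\cdot n!$, both $o(T_1)$). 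The same comparison shows each of the three correction terms is $o((n^2-n)!/((n-1)!)^{n})$ --- the first, for instance, is smaller by a factor $(n-1)^{-(n-1)}$ --- so $|\Sigma_n|\sim 2T_1 = 4n\,(n^2-n)!/((n-1)!)^{n}$. The main obstacle I foresee is the structural case analysis for mixed subsets: one must be certain the list of surviving shapes is complete and, more delicately, track precisely how the two orderings on each line are constrained so that the counts $4n$, $8\lfloor n/2\rfloor$, $2\lfloor n/2\rfloor$ and the parity term $\lfloor n/2\rfloor$ come out correctly; the leftover-multiplicity bookkeeping for $|A_S|$ and the small-$n$ boundary are the remaining places where sign or off-by-one slips are easy.
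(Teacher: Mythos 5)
Your proposal is correct, and its combinatorial core coincides with the paper's: the four ``weights'' you compute for the mixed configurations are exactly the counts in Lemmas \ref{RC}, \ref{RCC}, and \ref{RRCC} (without the factor $2$, since you pin the orientation inside the event), and your tallies $4n$, $8\lfloor n/2\rfloor$, $2\lfloor n/2\rfloor$ with signs $-,+,-$ reproduce equation \eqref{maineq}. The organization, however, is genuinely different: the paper runs a nested inclusion--exclusion, first writing $|\Sigma_n|=2|R|-|R\cap C|$ and then expanding $|R\cap C|$ over the $R_i$, using symmetry bijections (rotations, reflections, row-reversal) and a separate middle-row lemma (Lemma \ref{M}) to reduce everything to a few canonical intersections; you instead run one flat inclusion--exclusion over the $4n$ oriented-line events and prove a single structural lemma saying which event-sets have nonzero weight (at most two rows and two columns, necessarily a complementary pair $\{r,n+1-r\}$ with all orientations forced). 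Your flat version buys cleaner bookkeeping---the odd-$n$ middle line is absorbed automatically into the $4n$ count rather than needing the indicator-function term $\mathbf{1}_{2\mathbf{N}}(n+1)$, and the signs come straight from $|S|$---at the cost of having to verify the exhaustiveness of the shape analysis yourself, which you do correctly (the pairwise compatibility condition $c\in\{r,n+1-r\}$ kills any mixed set with three rows or three columns, playing the role of the paper's pigeonhole remark). Your asymptotic argument is also sound and in fact more explicit than the paper's, which simply discards all but the leading term; your bound $T_i/T_1\le\bigl(2n/(n-i)^n\bigr)^{i-1}$ for $2\le i\le n-2$, the direct evaluation of $T_{n-1}$ and $T_n$, and the comparison showing the three correction terms are smaller by factors like $(n-1)^{-(n-1)}$ make the step $|\Sigma_n|\sim 2T_1=4n(n^2-n)!/(n-1)!^n$ rigorous. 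The only points to tidy are the ones you already flag: state explicitly that subsets assigning both orientations to one line have $|A_S|=0$ (so they vanish from the flat sum), and handle the small $n$ for which terms like $(n-4)!$ are undefined by direct verification.
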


\begin{corollary} \label{asymp_coro}
    Let $\omega\sim\mathcal{U}(\Omega_n)$ be a random E$n$S. The probability that $\omega$ is also a CE$n$S is \[
    P(\omega\in\Sigma_n)\sim\dfrac{4n^{n+1}(n^2-n)!}{n^2!}.
    \]
\end{corollary}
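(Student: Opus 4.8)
The plan is to reduce the statement to Theorem \ref{main_theorem} together with the elementary count of $|\Omega_n|$. First I would observe that an E$n$S is exactly a way of distributing the multiset $\{1,\dots,1,2,\dots,2,\dots,n,\dots,n\}$ (each symbol with multiplicity $n$) into the $n^2$ labeled cells of the grid, so by the standard multinomial argument $|\Omega_n| = \binom{n^2}{n,n,\dots,n} = (n^2)!/(n!)^n$. Since $\omega$ is drawn uniformly from $\Omega_n$, we have the exact identity $P(\omega\in\Sigma_n) = |\Sigma_n|/|\Omega_n|$.

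Next I would substitute the asymptotic estimate $|\Sigma_n| \sim 4n(n^2-n)!/((n-1)!)^n$ furnished by Theorem \ref{main_theorem}. Dividing a sequence asymptotically equivalent to $b_n$ by the fixed (nonzero, non-asymptotic) sequence $|\Omega_n|$ preserves asymptotic equivalence, so
\[
P(\omega\in\Sigma_n) \sim \frac{4n(n^2-n)!}{((n-1)!)^n}\cdot\frac{(n!)^n}{(n^2)!}.
\]
Then I would simplify the ratio of factorial powers via $(n!)^n/((n-1)!)^n = (n!/(n-1)!)^n = n^n$, which collapses the right-hand side to $4n^{n+1}(n^2-n)!/(n^2)!$, exactly the claimed expression.

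There is essentially no hard analytic step here: all the real work is already contained in Theorem \ref{main_theorem}. The only points deserving a word of care are (i) confirming the count $|\Omega_n| = (n^2)!/(n!)^n$, and (ii) being explicit that $\sim$ behaves well under division by an exact sequence, i.e. that $a_n\sim b_n$ and $c_n\neq 0$ imply $a_n/c_n\sim b_n/c_n$. If one instead wanted a quantitative error term for $P(\omega\in\Sigma_n)$, one would track the lower-order terms of the exact formula in Theorem \ref{main_theorem} rather than just its leading term, but for the stated asymptotic equivalence this refinement is unnecessary.
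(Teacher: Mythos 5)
Your proposal is correct and follows essentially the same route as the paper: it takes the asymptotic $|\Sigma_n|\sim 4n(n^2-n)!/((n-1)!)^n$ from Theorem \ref{main_theorem}, divides by $|\Omega_n|=n^2!/n!^n$ from Lemma \ref{O}, and simplifies via $(n!/(n-1)!)^n=n^n$, exactly as the paper does. The only cosmetic difference is that the paper re-derives the leading-term asymptotic for $|\Sigma_n|$ inside this proof and adds a limit computation showing the probability tends to zero, neither of which is needed for the stated equivalence.
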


\begin{theorem} \label{distro_theorem}
    Let $\omega\sim\mathcal{U}(\Omega_n)$ be a random E$n$S. Define a random variable $X_n:\Omega_n\to\mathbf{R}$ to count the number of consecutive rows or columns in $\omega$. Then, the probability mass function $p_{X_n}(x)$ of $X_n$ is
    
    {\footnotesize
    \[p_{X_n}(x)=
    \begin{cases}
       1-\frac{n!^n}{n^2!}\Bigg(\left(\sum_{i=1}^{n}(-2)^{i+1}\binom{n}{i}\frac{(n^2-in)!}{(n-i)!^{n}}\right)-\frac{(4n)(n^2-2n+1)!}{(n-1)!(n-2)!^{n-1}}\\
       \qquad+\frac{8\lfloor n/2\rfloor(n^2-3n+2)!}{(n-2)!^2(n-3)!^{n-2}}-\frac{2\lfloor n/2\rfloor(n^2-4n+4)!}{(n-2)!^2(n-4)!^{n-2}}\Bigg)\Bigg)&\text{if}~x=0\\
       
       \frac{n!^n}{n^2!}\Bigg(\left(n\sum_{i=1}^{n}(-2)^{i+1}\binom{n-1}{i-1}\frac{(n^2-in)!}{(n-i)!^n}\right)-\frac{(8n)(n^2-2n+1)!}{(n-1)!(n-2)!^{n-1}}\\
       +\frac{8(\mathbf{1}_{2\mathbf{N}}(n)+n+\lfloor n/2\rfloor-1)(n^2-3n+2)!}{(n-2)!^2(n-3)!^{n-2}}-\frac{(9\cdot\mathbf{1}_{2\mathbf{N}}(n)+9n+2\lfloor n/2\rfloor-9)(n^2-4n+4)!}{(n-2)!^2(n-4)!^{n-2}}\Bigg)&\text{if}~x=1\\
       
       \frac{n!^n}{n^2!}\Bigg(\frac{4n(n^2-2n+1)!}{(n-1)!(n-2)!^{n-1}}+2^3\binom{n}{2}\left(\sum_{k=0}^{n-2}(-2)^k\binom{n-2}{k}\frac{(n^2-2n-nk)!}{(n-k-2)!^n}\right)\\
       \qquad-3\bigg(\frac{4(n+\mathbf{1}_{2\mathbf{N}}(n)-1)(n^2-3n+2)!}{(n-2)!^2(n-3)!^{n-2}}-\frac{4(n+\mathbf{1}_{2\mathbf{N}}(n)-1)(n^2-4n+4)!}{(n-2)!^2(n-4)!^{n-2}}\bigg)\Bigg)&\text{if}~x=2\\
       
       \frac{n!^n}{n^2!}\Bigg(2^4\binom{n}{3}\left(\sum_{k=0}^{n-3}(-2)^k\binom{n-3}{k}\frac{(n^2-3n-nk)!}{(n-k-3)!^n}\right)+\frac{4(n+\mathbf{1}_{2\mathbf{N}}(n)-1)(n^2-3n+2)!}{(n-2)!^2(n-3)!^{n-2}}\\
       \qquad-\frac{4(n+\mathbf{1}_{2\mathbf{N}}(n)-1)(n^2-4n+4)!}{(n-2)!^2(n-4)!^{n-2}}\Bigg)&\text{if}~x=3\\
       
       \frac{n!^n}{n^2!}\Bigg(2^5\binom{n}{4}\left(\sum_{k=0}^{n-4}(-2)^k\binom{n-4}{k}\frac{(n^2-4n-nk)!}{(n-k-4)!^n}\right)+\frac{(n+\mathbf{1}_{2\mathbf{N}}(n)-1)(n^2-4n+4)!}{(n-2)!^2(n-4)!^{n-2}}\Bigg)&\text{if}~x=4\\
       
       \frac{n!^n}{n^2!}\Bigg(2^{x+1}\binom{n}{x}\sum_{k=0}^{n-x}(-2)^k\binom{n-x}{k}\frac{(n^2-nx-nk)!}{(n-x-k)!^n}\Bigg)&\text{if}~x>4
    \end{cases}.
    \]}
\end{theorem}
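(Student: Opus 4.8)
The plan is to count $N_x:=|\{\omega\in\Omega_n:X_n(\omega)=x\}|$ directly; since $|\Omega_n|=n^2!/n!^n$, a formula for $N_x$ is equivalent to the claimed formula for $p_{X_n}(x)$, and the $x=0$ case is simply $N_0=|\Omega_n|-|\Sigma_n|$, which Theorem~\ref{main_theorem} already supplies. For $x\ge1$ I would run a sieve. A row or column of an E$n$S is consecutive exactly when it equals one of the two patterns $(1,2,\dots,n)$ or $(n,\dots,2,1)$, so for $n\ge2$, $X_n$ is a sum of $4n$ indicator variables, one per (line, pattern) pair, no two of which (for a fixed line) can both equal $1$. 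The inclusion--exclusion formula for the distribution of a sum of indicators gives $N_x=\sum_{k\ge x}(-1)^{k-x}\binom{k}{x}T_k$, where $T_k=\sum_{|S|=k}|A_S|$ and $A_S$ is the set of E$n$Ss realizing all $k$ chosen line-patterns. Splitting each $S$ by its number $a$ of rows and $b$ of columns, $T_k=\sum_{a+b=k}M_{a,b}$, where $M_{a,b}$ counts pairs consisting of a consistent assignment of orientations to $a$ rows and $b$ columns (forcing those lines consecutive) together with a completion to a full E$n$S; equivalently, $N_x=\sum_{a+b\ge x}(-1)^{a+b-x}\binom{a+b}{x}M_{a,b}$.

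The structural input is a lemma describing when consecutive rows and columns coexist, obtained by solving the cell-agreement equations at their intersections. Writing $\phi(+,t)=t$ and $\phi(-,t)=n+1-t$, the value an oriented consecutive row $r$ forces at $(r,c)$ is $\phi(\varepsilon_r,c)$, and the one an oriented consecutive column $c$ forces there is $\phi(\varepsilon_c,r)$; these agree iff $c\in\{r,n+1-r\}$, and (when $r\ne n+1-r$) $c=r$ precisely when the two orientations agree. Propagating this through all intersections shows: three or more consecutive rows force the square to have no consecutive column, and symmetrically; and if an E$n$S has two distinct consecutive rows together with a consecutive column, then those rows are rows $r$ and $n+1-r$ with opposite orientations and every consecutive column has index in $\{r,n+1-r\}$. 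Hence $M_{a,b}=0$ whenever $\min(a,b)\ge1$ and $\max(a,b)\ge3$, so the only nonzero ``mixed'' values are $M_{1,1}$, $M_{1,2}=M_{2,1}$ (by transposition), and $M_{2,2}$, and the sieve over $k$ truncates at $k=4$.

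It then remains to evaluate the $M_{a,b}$ and resum. For the ``parallel'' pieces, choosing $a$ rows, orienting each, and freely completing gives $M_{a,0}=M_{0,a}=\binom{n}{a}2^a\frac{(n^2-an)!}{(n-a)!^n}$; using $\binom{k}{x}\binom{n}{k}=\binom{n}{x}\binom{n-x}{k-x}$ and the binomial theorem, $\sum_{k\ge x}(-1)^{k-x}\binom{k}{x}M_{k,0}$ collapses to $2^x\binom{n}{x}\sum_{k=0}^{n-x}(-2)^k\binom{n-x}{k}\frac{(n^2-nx-nk)!}{(n-x-k)!^n}$, precisely the term common to every branch of the PMF; for $x>4$ this, doubled for rows versus columns (hence the $2^{x+1}$), is the whole answer. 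For the mixed pieces one enumerates the consistent oriented configurations allowed by the lemma and, in each, counts the copies of each symbol already pinned down inside the union of the chosen lines, reading off the multinomial for the remaining cells; this yields $M_{1,1}=4n\,\frac{(n^2-2n+1)!}{(n-1)!\,(n-2)!^{n-1}}$, $M_{2,1}=4\lfloor n/2\rfloor\,\frac{(n^2-3n+2)!}{(n-2)!^2(n-3)!^{n-2}}$, and $M_{2,2}=2\lfloor n/2\rfloor\,\frac{(n^2-4n+4)!}{(n-2)!^2(n-4)!^{n-2}}$, where $\lfloor n/2\rfloor$ counts the admissible index pairs $\{r,n+1-r\}$ and the indicator $\mathbf{1}_{2\mathbf{N}}(n)$ records whether the fixed point $r=n+1-r$ is available (for instance, in the type-$(1,2)$ count a column pair $\{c,n+1-c\}$ admits $n+\mathbf{1}_{2\mathbf{N}}(n)-1$ valid rows). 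Substituting all of this into $N_x=\sum_{a+b\ge x}(-1)^{a+b-x}\binom{a+b}{x}M_{a,b}$, i.e.\ adding to the parallel term the finitely many mixed contributions with weights $\binom{2}{x},\binom{3}{x},\binom{4}{x}$, and dividing by $n^2!/n!^n$, produces the six cases.

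The main obstacle is the mixed enumeration in the third step: carrying the orientation bookkeeping correctly through every intersection and, more delicately, tracking the fixed-point coincidences $r=n+1-r$, which is exactly where the $\lfloor n/2\rfloor$'s and $\mathbf{1}_{2\mathbf{N}}(n)$'s appear and where the parity of $n$ begins to matter. Because $M_{1,1},M_{2,1},M_{2,2}$ feed into the $x=1,2,3,4$ branches with different signs and binomial weights, a single off-by-one or parity slip propagates into several branches at once, so this step should be cross-checked against small-$n$ brute-force enumeration and against the constraint $\sum_x N_x=n^2!/n!^n$.
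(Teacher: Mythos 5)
Your route is genuinely different from the paper's: rather than building each branch constructively and repairing overcounts case by case, you run the ``exactly $x$'' (Jordan) sieve over the $4n$ (line, orientation) events and feed it a compatibility lemma. That lemma is correct, and so are your mixed counts: $M_{1,1}=4n\,\frac{(n^2-2n+1)!}{(n-1)!\,(n-2)!^{n-1}}$, $M_{2,1}=M_{1,2}=4\lfloor n/2\rfloor\frac{(n^2-3n+2)!}{(n-2)!^2(n-3)!^{n-2}}$, and $M_{2,2}=2\lfloor n/2\rfloor\frac{(n^2-4n+4)!}{(n-2)!^2(n-4)!^{n-2}}$ are exactly Lemmas \ref{RC}, \ref{RCC} and \ref{RRCC} summed over the admissible index and orientation choices (with the middle row of Lemma \ref{M} folded in, since $2\lfloor n/2\rfloor=n+\mathbf{1}_{2\mathbf{N}}(n)-1$). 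The collapse of the parallel part to $2^{x+1}\binom{n}{x}\sum_k(-2)^k\binom{n-x}{k}\frac{(n^2-nx-nk)!}{(n-x-k)!^n}$ is also right, and for $x=0$, $x=3$, $x=4$ and $x>4$ your resummation does reproduce the stated branches.

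The gap is your final assertion that substituting into $N_x=\sum_{a+b\ge x}(-1)^{a+b-x}\binom{a+b}{x}M_{a,b}$ ``produces the six cases'': it does not at $x=1$ and $x=2$. The sieve weights give $+\binom{4}{2}M_{2,2}=+6M_{2,2}$ in the $x=2$ branch and $-\binom{4}{1}M_{2,2}=-4M_{2,2}$ in the $x=1$ branch, i.e.\ coefficients $+6(n+\mathbf{1}_{2\mathbf{N}}(n)-1)$ and $-4(n+\mathbf{1}_{2\mathbf{N}}(n)-1)$ on $(n^2-4n+4)!/\big[(n-2)!^2(n-4)!^{n-2}\big]$, whereas the stated branches carry $+12(n+\mathbf{1}_{2\mathbf{N}}(n)-1)$ and $-(9\cdot\mathbf{1}_{2\mathbf{N}}(n)+9n+2\lfloor n/2\rfloor-9)=-10(n+\mathbf{1}_{2\mathbf{N}}(n)-1)$; all other coefficients agree, and the two versions coincide after summing over $x$, so Theorem \ref{main_theorem} and the tables are insensitive to the difference. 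The brute-force check you yourself propose settles the split in favor of your sieve: at $n=4$ an exact-configuration count gives $229008$ squares with exactly two consecutive lines and $5442784$ with exactly one (with $N_3=4192$, $N_4=56$, summing to $|\Sigma_4|=5676040$), while the stated $x=2$ and $x=1$ branches evaluate to $229152$ and $5442640$; the discrepancy $144$ is six times the $24$ E$4$Ss having two consecutive rows and two consecutive columns, which the stated $x=2$ expression counts six times instead of zero (a difference of order $10^{-6}$ in probability, invisible to a $10^8$-sample Monte Carlo). So your method is sound and your intermediate quantities are correct, but the last step as written claims a match with the displayed formula that does not hold; to finish you must either exhibit an error in your weights (I do not see one) or state corrected $x=1,2$ branches in place of the ones above.
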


This Theorem \ref{distro_theorem} demonstrates the dramatic scarcity of CE$n$Ss among higher-order E$n$Ss, which may be of interest to those studying the cryptographic applications of Latin squares.

In section \ref{algebraic interpretations}, we discuss the algebraic implications of these results, proving 

\begin{theorem} \label{no QGs are Ls}
    As $n\to \infty$, the proportion of quasigroups among magmas with $n$ elements is less than $n/n!$.
\end{theorem}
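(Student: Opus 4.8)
The plan is to read ``proportion of quasigroups with $n$ elements'' through the Cayley-table dictionary of Section~\ref{algebraic interpretations}: the multiplication table of a quasigroup on $\{1,\dots,n\}$ is exactly a Latin square of order $n$, while an E$n$S is exactly the table of a magma on $\{1,\dots,n\}$ in which each of the $n$ elements occurs $n$ times. Writing $L_n$ for the set of Latin squares of order $n$, the quantity in question is $|L_n|/|\Omega_n|$ (and the argument below is unaffected if one instead takes the ambient class to be the consecutive E$n$S's and divides by $|\Sigma_n|$), and the goal is to show this is below $n/n!$ for all sufficiently large $n$.

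First I would record the two ingredients. On one hand, $|\Omega_n|=(n^2)!/(n!)^n$, the multinomial coefficient $\binom{n^2}{n,\dots,n}$; this is precisely the normalization $n!^n/(n^2)!=1/|\Omega_n|$ that appears throughout Corollary~\ref{asymp_coro} and Theorem~\ref{distro_theorem}. On the other hand, a Latin square is far more rigid than a generic E$n$S: every row of a Latin square is a permutation of $\{1,\dots,n\}$ (equivalently, each left translation $x\mapsto a*x$ of a quasigroup is a bijection), so, ignoring the column constraint, $|L_n|\le (n!)^n$. Combining the two,
\[
\frac{|L_n|}{|\Omega_n|}\ \le\ \frac{(n!)^n\,(n!)^n}{(n^2)!}\ =\ \frac{(n!)^{2n}}{(n^2)!}.
\]

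It then remains to check $(n!)^{2n}/(n^2)!<n/n!$, i.e.\ $(n!)^{2n+1}<n\,(n^2)!$, for large $n$, and I would finish this with Stirling's formula $\ln m!=m\ln m-m+\tfrac12\ln(2\pi m)+O(1/m)$: one gets $\ln\bigl((n!)^{2n+1}\bigr)=2n^2\ln n-2n^2+O(n\ln n)$ and $\ln\bigl(n\,(n^2)!\bigr)=2n^2\ln n-n^2+O(\ln n)$, whose difference is $n^2+O(n\ln n)\to+\infty$; so the inequality holds past a threshold that a short numerical check makes explicit. (In fact the bound above is already of order $e^{-(1+o(1))n^2}$, so replacing $|\Omega_n|$ in the denominator by $|\Sigma_n|\sim 4n(n^2-n)!/(n-1)!^{n}$ from Theorem~\ref{main_theorem}---which costs only a factor $e^{(1+o(1))n\ln n}$---changes nothing.)

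The step I expect to cause whatever friction there is is conceptual rather than computational. The natural attempts to route through Theorem~\ref{main_theorem}---say, bounding $|L_n|$ by $n!$ times the number of E$n$S-completions of a fixed consecutive first row, or by $n!(n-1)!$ times the completions of a fixed consecutive first row and first column---are far too lossy: the resulting ratio bounds still exceed $n/n!$ by a wide margin (the first even exceeds $1$), precisely because E$n$S's overwhelmingly outnumber Latin squares once $n$ is moderately large. One must instead use a bound that reflects the row-by-row rigidity of a Latin square, and $|L_n|\le (n!)^n$ already does; a sharper input such as the Bregman--Minc permanent bound $|L_n|\le\prod_{k=1}^n (k!)^{n/k}$ is available but not needed.
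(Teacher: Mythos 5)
Your argument is internally sound for what it proves---$|L_n|\le (n!)^n$, $|\Omega_n|=(n^2)!/n!^n$, and the Stirling comparison showing $(n!)^{2n+1}<n\,(n^2)!$ for large $n$ are all fine---but it proves a different statement from the one the paper intends, and the two are logically independent. The theorem's internal label (``no QGs are Ls'') and the closing sentence of the paper's proof (``implying almost no quasigroups are loops'') show that the intended reading is: among quasigroups of order $n$ (equivalently, among Latin squares of order $n$ viewed as Cayley tables), the proportion that are \emph{loops}---i.e.\ possess a two-sided identity, so that some row and the matching column are in consecutive order---is less than $n/n!$. The denominator is the number of quasigroups, not the number of equi-$n$-squares. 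Your bound on $|L_n|/|\Omega_n|$ (Latin squares among E$n$Ss) says nothing about how rare identity elements are \emph{within} the class of Latin squares, so it neither implies nor is implied by the paper's claim; indeed, the scarcity of Latin squares among E$n$Ss is treated separately in the paper, in the remark following the proof of Corollary \ref{asymp_coro}.

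The paper's proof of Theorem \ref{no QGs are Ls} is a short first-moment argument carried out \emph{inside} the class of Latin squares: each row (and column) of a uniformly random Latin square is marginally uniform over $S_n$, so a given line is the identity permutation with probability $1/n!$; by linearity of expectation the expected number of consecutive lines among the $2n$ rows and columns is $2n/n!$; since a loop's Cayley table has at least two such lines (an identity row and the matching identity column), Markov's inequality at threshold $2$ bounds the probability by $n/n!$. That Markov/expectation step within the Latin-square ensemble is the idea your proposal is missing. (A minor side point: your claim that the crude bound $n!\,(n^2-n)!/(n-1)!^n$ divided by $|\Omega_n|$ ``exceeds $1$'' is not right---that ratio is below $1$ already at $n=2$ and decays roughly like $e^{1-n}$---though your broader observation that such bounds are far too lossy to get below $n/n!$ is correct.)
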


\begin{definition}[\cite{HowieSemigroups} Completely simple semigroup]
    A semigroup $S$ is \textit{simple} if it has no ideals other than itself. A simple semigroup $S'$ is \textit{completely simple} if there exists some idempotent $e\in S$ such that $e$ is primitive for every other idempotent; that is for every other idempotent $f$, \[ef=fe=f\neq 0\implies e=f.\]
\end{definition}

\begin{theorem} \label{completely simple}
     The Cayley table of every finite completely simple semigroup of order $n$ is an equi-$n$-square.
\end{theorem}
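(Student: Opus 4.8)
The plan is to invoke the Rees--Suschkewitsch structure theorem and then count, for a fixed element $z$, the number of ordered factorizations $z = xy$. Recall that every completely simple semigroup $S$ is isomorphic to a Rees matrix semigroup $\mathcal{M}(G; I, \Lambda; P)$: its underlying set is $I \times G \times \Lambda$ for a group $G$, index sets $I$ and $\Lambda$, and a $\Lambda \times I$ sandwich matrix $P = (p_{\lambda i})$ over $G$, with multiplication $(i, a, \lambda)(j, b, \mu) = (i,\, a\, p_{\lambda j}\, b,\, \mu)$. If $|S| = n$ then $n = |I|\,|G|\,|\Lambda|$. Since an equi-$n$-square is precisely an $n \times n$ array --- here the Cayley table of $S$, after fixing an enumeration of its elements --- in which each of the $n$ symbols occurs exactly $n$ times, it suffices to show that every $z \in S$ has exactly $n$ preimages under the multiplication map $S \times S \to S$.

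To carry out the count, fix $z = (k, c, \nu)$. The equation $(i,a,\lambda)(j,b,\mu) = z$ is equivalent to $i = k$, $\mu = \nu$, and $a\,p_{\lambda j}\, b = c$ in $G$. Thus $i$ and $\mu$ are forced, while $\lambda \in \Lambda$, $j \in I$, and $a \in G$ may be chosen freely, and for each such choice the remaining coordinate $b = p_{\lambda j}^{-1}\, a^{-1}\, c$ is uniquely determined in $G$. Hence the number of factorizations of $z$ is exactly $|\Lambda|\,|I|\,|G| = n$, independent of $z$; so each element of $S$ appears exactly $n$ times in the Cayley table, which is therefore an equi-$n$-square.

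The argument is short once the structure theorem is in hand, so the only real content is the counting step, and the one point to get right is that although the sandwich entry $p_{\lambda j}$ varies with $\lambda$ and $j$, it never obstructs solving for $b$; this is exactly where it matters that the middle coordinate lives in a group rather than a mere semigroup, and it is what makes the fiber size constant. As an alternative one could argue in egg-box style, using that in a completely simple semigroup $xy$ always lies in $R_x \cap L_y$ and that left translation by $x$ restricts to a bijection between the relevant $\mathcal{H}$-classes; this reproduces the same count $|I|\,|G|\,|\Lambda| = n$. I would present the Rees matrix version as primary since it needs less machinery, and note that this does \emph{not} in general give a Latin square, since completely simple semigroups need not be cancellative.
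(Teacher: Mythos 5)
Your proposal is correct and follows essentially the same route as the paper: invoke the Rees--Suschkewitsch theorem, fix a target element, observe that the first and last coordinates of the factors are forced while $\lambda$, $j$, and $a$ are free and $b$ is then uniquely determined, giving fiber size $|I|\,|G|\,|\Lambda| = n$. The egg-box alternative and the remark about non-cancellativity are nice additions but the core argument matches the paper's.
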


\begin{corollary} \label{nonLatin_coro}
    Not all equi-$n$-squares that are associative Cayley tables are Latin squares.
\end{corollary}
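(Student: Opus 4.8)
The plan is to obtain Corollary \ref{nonLatin_coro} as an essentially immediate consequence of Theorem \ref{completely simple}. First I would isolate the classical bridge between the two notions: if $(S,\cdot)$ is a finite semigroup of order $n$, then its Cayley table is a Latin square if and only if $S$ is a group. Indeed, the row of the table indexed by $a$ is a permutation of $\{1,\dots,n\}$ exactly when the left translation $x\mapsto a\cdot x$ is injective (injectivity forces surjectivity on a finite set), and likewise for columns and right translations; so the table being Latin is equivalent to both cancellation laws holding, and a finite cancellative semigroup is a group, while conversely every group has a Latin square Cayley table. Hence, among all associative Cayley tables of order $n$, precisely those of groups are Latin squares, and it suffices to exhibit, for each $n\ge 2$, a finite completely simple semigroup of order $n$ that is \emph{not} a group: by Theorem \ref{completely simple} its Cayley table is then an equi-$n$-square, it is associative by construction, and by the above it fails to be a Latin square.

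For the witness I would take the left-zero semigroup $Z_n$ on $\{1,\dots,n\}$ with $x\cdot y = x$. Associativity is immediate since $(x\cdot y)\cdot z = x = x\cdot(y\cdot z)$; it is simple because $x\cdot Z_n\cdot x = Z_n$ for every $x$, so $Z_n$ is its only two-sided ideal; and every element is an idempotent that is trivially minimal in the natural partial order on idempotents, hence primitive, so $Z_n$ is completely simple (equivalently, $Z_n$ is the degenerate Rees matrix semigroup $\mathcal{M}[\{e\};\{1,\dots,n\},\{\lambda\};P]$ over the trivial group, i.e.\ a rectangular band). For $n\ge 2$ the row of the Cayley table indexed by $x$ is the constant tuple $(x,x,\dots,x)$, so the table is not a Latin square, whereas Theorem \ref{completely simple} guarantees it is an equi-$n$-square (and one checks directly that each symbol $x$ occurs exactly $n$ times, all within its own row). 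This establishes the corollary.

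The argument is almost entirely routine; the one place that deserves care is the structural claim that $Z_n$ is \emph{completely} simple rather than merely simple, which I would pin down either via the minimal-idempotent characterization or via the explicit Rees matrix representation above. I should also note that, strictly speaking, Theorem \ref{completely simple} is not even needed — the left-zero example can be verified by hand in a single line — but I would still present the statement as a corollary of Theorem \ref{completely simple} so that it fits the algebraic narrative of Section \ref{algebraic interpretations}, reinforcing that equi-$n$-squares, unlike Latin squares, genuinely arise as associative (indeed completely simple) Cayley tables.
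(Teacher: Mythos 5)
Your argument is correct and follows essentially the same route as the paper: invoke Theorem \ref{completely simple} together with the fact that a finite semigroup whose Cayley table is a Latin square must be a group; the paper's proof simply says ``consider a finite simple semigroup that is not also a group,'' while you additionally exhibit a concrete witness (the left-zero semigroup) and spell out the Latin-iff-group equivalence, which the paper leaves implicit. One small slip worth fixing: for the left-zero semigroup $x\cdot Z_n\cdot x=\{x\}$, not $Z_n$; simplicity should be checked via $Z_n\cdot x\cdot Z_n=Z_n$ (or read off directly from your Rees matrix / rectangular band description), and this correction does not affect the conclusion.
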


Finally, in section \ref{code} we detail the computational methods which confirm the above results.

\begin{remark} \label{notation}
(Regarding Notation)

When referring to a row or column of a consecutive equi-$n$-square, take ``consecutive'' to mean ``consecutive or reverse-consecutive,'' unless otherwise specified. It should be noted that the choice between a consecutive or reverse-consecutive row or column is binary. In many of the following formulas which result from constructive arguments, there is an additional factor of 2 which comes about from this choice---even if it is not stated explicitly in said arguments.

Furthermore, we define the opposite of a row or column $i$ to be row or column $i':=n-i+1$, respectively. We will also make frequent use of the indicator function on some set $A$, $\mathbf{1}_A$. As is convention, we define $\mathbf{1}_A$ in the following way,
\[
\mathbf{1}_A(a):=
\begin{cases}
    1&\text{if}\;a\in A\\
    0&\text{otherwise}
\end{cases}.
\]
\end{remark}

\section{Lemmas \label{lemmas}}

\begin{lemma} \label{O}
    The number of equi-$n$-squares is 
    \[
    |\Omega_n|=\dfrac{n^2!}{n!^n}.
    \]
\end{lemma}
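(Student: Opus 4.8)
The plan is to count equi-$n$-squares by viewing the construction of such an array as a sequential placement of the $n^2$ symbols into the $n^2$ cells, where the multiset of symbols is fixed (each of $1,\dots,n$ with multiplicity $n$). First I would observe that the total number of ways to arrange $n^2$ \emph{distinguishable} objects into the $n^2$ cells of the array is $n^2!$; this overcounts because the $n$ copies of any given symbol are indistinguishable.

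Next I would quotient out the overcounting. Since permuting the $n$ identical copies of symbol $k$ among themselves yields the same equi-$n$-square, and these permutations for distinct symbols act independently, the total overcount factor is $\prod_{k=1}^{n} n! = n!^{n}$. Dividing gives
\[
|\Omega_n|=\frac{n^2!}{n!^{n}},
\]
which is exactly the multinomial coefficient $\binom{n^2}{n,n,\dots,n}$. An equivalent and perhaps cleaner phrasing: $|\Omega_n|$ counts the number of distinct words of length $n^2$ over the alphabet $\{1,\dots,n\}$ in which each letter occurs exactly $n$ times (reading the array row by row gives a bijection between equi-$n$-squares and such words), and the number of such words is the standard multinomial count $n^2!/(n!)^n$.

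This argument is elementary enough that there is no real obstacle; the only thing to be careful about is justifying that the map ``fill cells with distinguishable tokens, then forget which copy of each symbol is which'' is exactly $n!^{n}$-to-one onto $\Omega_n$, i.e.\ that the fibers all have the same size. This holds because the symmetric group $S_n \times \cdots \times S_n$ (one factor per symbol) acts freely on the set of distinguishable-token fillings with orbits equal to the fibers, so every fiber has size $|S_n|^{n}=n!^{n}$. Hence the quotient is well-defined and the formula follows. I would present this in two or three sentences and move on, since the identity $|\Omega_n| = n^2!/n!^{n}$ is needed throughout the paper mainly as a normalizing denominator.
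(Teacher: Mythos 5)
Your proof is correct. The paper derives the same count constructively: it places the $n$ copies of $1$, then the $n$ copies of $2$, and so on, obtaining $|\Omega_n|=\prod_{k=0}^{n-1}\binom{n^2-kn}{n}$, which telescopes to $n^2!/n!^n$. You instead count fillings with $n^2$ distinguishable tokens and quotient by the $n!^n$ relabelings within each symbol class (equivalently, count anagrams of a word with each letter repeated $n$ times), which produces the multinomial coefficient $\binom{n^2}{n,\dots,n}$ directly. The two arguments are just two standard expansions of the same multinomial identity, so nothing of substance differs; if anything, your remark that the product of symmetric groups acts freely on labeled fillings, so all fibers have size exactly $n!^n$, supplies the justification for the division step that the overcount-and-quotient phrasing needs, while the paper's sequential-choice product avoids that issue entirely by never introducing labels. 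Either proof is complete and appropriate for the role this lemma plays as a normalizing denominator.
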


\begin{proof}
    Every equi-$n$-square is an arrangement of the multiset containing $n$ copies of each symbol $1,\dots,n$. Hence the number of such arrays is the multinomial coefficient
    \[
    |\Omega_n|=\frac{(n^2)!}{(n!)^n,
    }
    \]
    which equals the expression stated.
\end{proof}

\begin{lemma} \label{R}
    Let $R$ be the set of all E$n$Ss with at least one consecutive row. The number of such E$n$Ss is
    \[
    |R|=\sum_{i=1}^{n}(-1)^{i+1}\cdot 2^i\cdot\dfrac{n!(n^2-in)!}{(n-i)!^{n+1}i!}.
    \]
\end{lemma}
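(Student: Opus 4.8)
The plan is to count E$n$Ss with at least one consecutive row by inclusion–exclusion over which rows are forced to be consecutive. For a fixed set $S$ of $i$ rows, I want to count the E$n$Ss in which every row of $S$ is consecutive (consecutive or reverse-consecutive, per Remark \ref{notation}). Each such row, being a permutation of $\{1,\dots,n\}$ in (reverse-)consecutive order, is one of exactly $2$ arrangements, so there are $2^i$ ways to choose the patterns for the $i$ designated rows once we know each contributes a full set $\{1,\dots,n\}$. Having fixed those $i$ rows, each symbol $k$ has been used exactly $i$ times, so $n-i$ copies of each of the $n$ symbols remain to be placed in the $n^2-in$ remaining cells; by the same telescoping-product argument as in Lemma \ref{O}, the number of ways to do this is $\frac{(n^2-in)!}{(n-i)!^{n}}$. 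Thus the number of E$n$Ss whose rows in $S$ are all consecutive is $2^i\cdot\frac{(n^2-in)!}{(n-i)!^{n}}$, independent of which $i$-subset $S$ we chose.

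Next I would assemble the inclusion–exclusion. Writing $A_j$ for the set of E$n$Ss whose $j$th row is consecutive, we have $|R|=\left|\bigcup_{j=1}^{n}A_j\right|=\sum_{i=1}^{n}(-1)^{i+1}\sum_{|S|=i}\left|\bigcap_{j\in S}A_j\right|$. By the previous paragraph each inner intersection has size $2^i\frac{(n^2-in)!}{(n-i)!^{n}}$ and there are $\binom{n}{i}$ choices of $S$, so
\[
|R|=\sum_{i=1}^{n}(-1)^{i+1}\binom{n}{i}\,2^i\,\frac{(n^2-in)!}{(n-i)!^{n}}.
\]
Finally I would rewrite $\binom{n}{i}=\frac{n!}{(n-i)!\,i!}$ and absorb the $(n-i)!$ into the denominator, turning $(n-i)!^{n}$ into $(n-i)!^{n+1}$, which yields exactly the claimed closed form $\sum_{i=1}^{n}(-1)^{i+1}2^i\frac{n!\,(n^2-in)!}{(n-i)!^{n+1}i!}$.

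The only genuinely delicate point is the independence claim used in the inclusion–exclusion: that fixing the contents of $i$ particular rows to be (reverse-)consecutive permutations leaves a residual counting problem identical to counting E$(n-i)$-style fillings, with the count depending only on $i$ and not on \emph{which} rows or \emph{which} of the $2^i$ consecutive patterns were used. This holds because once $i$ rows are each a full permutation of $\{1,\dots,n\}$, every symbol's remaining multiplicity is exactly $n-i$ regardless of the patterns, and the remaining cells form a featureless set of size $n^2-in$ with no further constraints — so the multinomial/telescoping count applies verbatim. I would also note in passing that for $i>\lfloor n\rfloor$ or rather once $n-i<0$ the terms vanish (the factorial in the denominator is undefined / the binomial is zero), so the sum up to $n$ is the natural range; no boundary subtlety arises because $\binom{n}{i}=0$ for $i>n$ anyway. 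This counts only the "rows" case; the columns case is symmetric and will be combined with it (with the row–column overlap corrections) in the proof of Theorem \ref{main_theorem}.
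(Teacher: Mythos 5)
Your proposal is correct and follows essentially the same route as the paper: inclusion–exclusion over the set of rows forced to be (reverse-)consecutive, with each intersection counted as $2^i$ choices of row patterns times the multinomial count $\frac{(n^2-in)!}{(n-i)!^{n}}$ for the remaining cells, then rewriting $\binom{n}{i}$ to obtain the stated closed form. No gaps.
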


\begin{proof}
    Let $R_i\subseteq R$ be the set E$n$Ss with consecutive row $i\in\{1,\dots,n\}$. By our definition of $R$, we have \[R=\bigcup_{i}R_i.\] To determine the size of $R$, we construct an arbitrary element. First let $i$ rows be consecutive. Then, the remaining elements may be placed. Finally we sum over all $i$ between $1$ and $n$, employing PIE to avoid any double counting.
\[
\begin{split}
    |R| & =\sum_{i=1}^{n}(-1)^{i+1}\binom{n}{i}2^i\prod_{j=0}^{n-1}\binom{n^2-in-j(n-i)}{n-i}\\
    & =\sum_{i=1}^{n}(-1)^{i+1}\binom{n}{i}2^i\cdot\dfrac{(n^2-in)!}{(n-i)!^n}.\\
\end{split}
\]
\end{proof}

\begin{lemma} \label{M}
    Let $R_{\frac{n+1}{2}}\cap C$ be the set of E$n$Ss with a consecutive middle row (should it exist) and at least one consecutive column. If we define
    \[M:=|R_{\frac{n+1}{2}}\cap C|, \] then
    \[M=\dfrac{4\cdot\mathbf{1}_{2\mathbf{N}}(n+1)(n^2-2n+1)!}{(n-1)!(n-2)!^{n-1}},\] where $\mathbf{1}_{2\mathbf{N}}$ is the indicator function of the even natural numbers.
\end{lemma}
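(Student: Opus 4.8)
The plan is to reduce the hybrid condition ``consecutive middle row \emph{and} at least one consecutive column'' to the far more rigid condition ``consecutive middle row \emph{and} consecutive middle column,'' and then to enumerate the surviving configurations directly. First I would dispose of the case $n$ even: then $(n+1)/2$ is not an integer, there is no middle row, so $R_{\frac{n+1}{2}}$ is empty and $M=0$; since $\mathbf{1}_{2\mathbf{N}}(n+1)=0$ for even $n$, the claimed formula holds trivially. So from now on assume $n$ is odd and set $m:=\frac{n+1}{2}$.

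The crux is the following observation. Any consecutive or reverse-consecutive column must carry the value $m$ in its row-$m$ entry: if the column reads $1,2,\dots,n$ top to bottom its middle entry is $m$, and if it reads $n,n-1,\dots,1$ its middle entry is $n+1-m=m$ as well. On the other hand, if row $m$ is consecutive it reads $1,2,\dots,n$ or $n,n-1,\dots,1$, and in either case the value $m$ appears in row $m$ exactly once, in column $m$. Hence in any element of $R_{\frac{n+1}{2}}$ the \emph{only} column that can be consecutive is column $m$, so $R_{\frac{n+1}{2}}\cap C=R_{\frac{n+1}{2}}\cap C_m$, where $C_m$ is the set of E$n$Ss with consecutive middle column.

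It then remains to count $|R_{\frac{n+1}{2}}\cap C_m|$ by a direct construction with no inclusion-exclusion needed. There are $2$ orientations for the middle row and $2$ for the middle column, and all $4$ combinations are admissible since every orientation places $m$ in the central cell $(m,m)$, so there is no conflict. Once these $2n-1$ cells are fixed, a short bookkeeping check shows the value $m$ has been used once and each of the other $n-1$ values exactly twice, leaving $n-1$ copies of $m$ and $n-2$ copies of every other value to be distributed over the remaining $n^2-(2n-1)=(n-1)^2$ cells; the number of such arrangements is $\frac{(n^2-2n+1)!}{(n-1)!\,(n-2)!^{\,n-1}}$ by the usual telescoping-multinomial argument of Lemma \ref{O}. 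Multiplying by the $4$ base configurations and reinstating the indicator to cover both parities gives $M=\dfrac{4\cdot\mathbf{1}_{2\mathbf{N}}(n+1)(n^2-2n+1)!}{(n-1)!(n-2)!^{\,n-1}}$.

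The only genuinely delicate point is the reduction in the second paragraph: recognizing that a consecutive middle row pins the value $m$ to a single column and thereby collapses ``at least one consecutive column'' to ``the middle column is consecutive.'' After that, the enumeration is routine; the one spot requiring care is the multiplicity count for $m$ versus the other $n-1$ values among the $2n-1$ prescribed cells.
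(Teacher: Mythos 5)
Your proposal is correct and follows essentially the same route as the paper: dispose of even $n$, observe that a consecutive middle row forces the only possible consecutive column to be the middle column, choose the $2\times 2$ orientations, and fill the remaining $(n-1)^2$ cells by the telescoping multinomial count, giving $\frac{4\,(n^2-2n+1)!}{(n-1)!\,(n-2)!^{\,n-1}}$. Your only addition is to spell out explicitly (via the forced placement of the value $\frac{n+1}{2}$) why column $\frac{n+1}{2}$ is the unique candidate, a step the paper asserts without justification.
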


\begin{proof}
    If $n$ is even then $M=0$. If $n$ is odd, we determine $M$ by deciding the order of the middle row and the middle column, which is the only choice for a consecutive column. Note that the number $\frac{n+1}{2}$ will have only occurred once so far in the grid, so there remain $n-1$ copies to be placed. Once this is done, the locations of the remaining $n-2$ copies of the remaining $n-1$ elements are placed. Using this procedure,
\[
\begin{split}
    M & =4\cdot\mathbf{1}_{2\mathbf{N}}(n+1)\binom{n^2-2n+1}{n-1}\prod_{k=0}^{n-2}\binom{n^2-2n+1-(n-1)-k(n-2)}{n-2}\\
    & = \dfrac{4\cdot\mathbf{1}_{2\mathbf{N}}(n+1)(n^2-2n+1)!}{(n-1)!(n-2)!^{n-1}}.
\end{split} 
\]
\end{proof}

\begin{lemma} \label{RC}
    Let $R_i\cap C_i$ be the set of E$n$Ss with consecutive row $i$ and consecutive column $i$. The number of such E$n$Ss is
    \[
    |R_i\cap C_i|=\dfrac{2(n^2-2n+1)!}{(n-1)!(n-2)!^{n-1}}.
    \]
\end{lemma}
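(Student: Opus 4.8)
The plan is to build a generic E$n$S in $R_i\cap C_i$ by first committing to the consecutive row and column, then counting the completions, exactly in the style of Lemma \ref{O}. The opening step is to pin down how much freedom the orientations actually give. Requiring row $i$ to run forwards forces the diagonal cell $(i,i)$ to contain the symbol $i$, and requiring column $i$ to run forwards forces that same cell to contain $i$ as well; but if the row runs forwards while the column runs in reverse, $(i,i)$ would have to equal both $i$ and its opposite $i'=n-i+1$, which is impossible except at the central index. Hence — for $i$ away from the middle — the two orientations are locked together: either both lines run forwards, or both run in reverse. This is the single binary choice (the implicit factor of $2$) anticipated in Remark \ref{notation}. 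The central row for odd $n$ is the degenerate case excluded here; it is accounted for separately through Lemma \ref{M}.

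Next I would tally the symbols consumed by the chosen row and column together. Row $i$ and column $i$ meet only at $(i,i)$, so jointly they occupy $2n-1$ cells, and along each of them every symbol appears exactly once. In the ``both forwards'' case the symbol $i$ sits at $(i,i)$ in both lines and so is used just once, while each symbol $v\neq i$ appears once in the row (at $(i,v)$) and once in the column (at $(v,i)$) — two distinct cells — and so is used twice; the ``both in reverse'' case is identical with $i'$ playing the role of $i$. Either way, after placing the row and column we are left with $n-1$ copies of one symbol, $n-2$ copies of each of the other $n-1$ symbols, and $n^2-(2n-1)=(n-1)^2$ empty cells.

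The third step is the telescoping product already used in Lemma \ref{O}: the number of ways to distribute those leftover symbols over the $(n-1)^2$ free cells is
\[
\binom{(n-1)^2}{\,n-1,\,\underbrace{n-2,\dots,n-2}_{n-1}\,}=\frac{(n^2-2n+1)!}{(n-1)!\,(n-2)!^{\,n-1}},
\]
obtained by writing it as $\prod_{k}\binom{(n-1)^2-(n-1)-k(n-2)}{n-2}$ once the $n-1$ copies of the lonely symbol are placed, and cancelling as in Lemma \ref{O}. Multiplying by the orientation factor $2$ yields exactly $\dfrac{2(n^2-2n+1)!}{(n-1)!(n-2)!^{n-1}}$.

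I expect the orientation-compatibility argument of the first step to be the only genuine obstacle: one must be certain the row and column orientations cannot be chosen independently and that $(i,i)$ is their sole point of interaction, so that no further constraints are imposed and the symbol multiplicities come out as claimed. Everything after that is the same routine multinomial bookkeeping as in Lemma \ref{O}.
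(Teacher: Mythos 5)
Your proposal is correct and follows essentially the same construction as the paper: fix the orientation of row $i$ (a factor of $2$), note that this forces the orientation of column $i$, observe that one symbol is left with $n-1$ copies and all others with $n-2$, and count the placements in the remaining $(n-1)^2$ cells by the telescoping/multinomial computation of Lemma \ref{O}. The only difference is that you spell out explicitly why the crossing cell $(i,i)$ locks the two orientations together and flag the middle-index case (handled by Lemma \ref{M}) --- details the paper leaves implicit in the phrase ``which determines the order of column $i$.''
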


\begin{proof}
     First, we determine the order of row $i$, which determines the order of column $i$. Note now that the element $i$ has only occurred once in the array, whereas all others have occurred twice. We place the remaining $n-1$ copies of element $i$, then the $n-2$ copies of all of the other elements. Using this procedure,
    \[
    \begin{split}
    |R_i\cap C_i| & =2\binom{n^2-2n+1}{n-1}\prod_{k=0}^{n-2}\binom{n^2-2n+1-(n-1)-k(n-2)}{n-2}\\
    & = \dfrac{2(n^2-2n+1)!}{(n-1)!(n-2)!^{n-1}}.
    \end{split}
    \]
    
\end{proof}

\begin{lemma} \label{RCC}
    Let $R_i\cap C_i\cap C_{i'}$ be the set of E$n$Ss with consecutive row $i$, consecutive column $i$, and consecutive column $i'$. The number of such E$n$Ss is
    \[
    |R_i\cap C_i \cap C_{i'}|=\dfrac{2(n^2-3n+2)!}{(n-2)!^2(n-3)!^{n-2}}.
    \]
\end{lemma}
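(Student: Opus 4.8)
The plan is to follow the constructive recipe used for Lemma~\ref{RC}, now with the third (column~$i'$) constraint. Throughout I would assume $i \neq i'$; this holds automatically when $n$ is even, and when $n$ is odd it fails only at the middle index $i = \frac{n+1}{2}$, the case already separated off in Lemma~\ref{M}. First, fix the direction of row~$i$ --- say it reads $1,2,\dots,n$ from left to right (the reverse-consecutive choice is symmetric and accounts for the global factor of~$2$ noted in Remark~\ref{notation}). The crucial point is that this single choice propagates to both columns. The shared cell $(i,i)$ now holds the value $i$; among the two consecutive orderings of column~$i$, the increasing one puts $i$ in row~$i$ while the decreasing one puts $i' \neq i$ there, so column~$i$ is forced to read $1,2,\dots,n$ downward. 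Likewise the shared cell $(i,i')$ holds the value $i' = n-i+1$; the increasing ordering of column~$i'$ would put $i \neq i'$ in row~$i$, so column~$i'$ is forced to read $n,n-1,\dots,1$ downward. One should also verify that the remaining constrained cells $(i',i)$ and $(i',i')$ impose no new condition, which they do not since rows other than row~$i$ are unconstrained.

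Next I would count the occurrences of each element in the resulting ``frame'' $=$ (row~$i$) $\cup$ (column~$i$) $\cup$ (column~$i'$). Since $i \neq i'$, the only overlaps among the three lines are the cells $(i,i)$ and $(i,i')$, so the frame occupies exactly $3n-2$ cells. Tracking positions with the orderings just fixed: element~$i$ sits at $(i,i)$ and $(i',i')$ (twice); element~$i'$ sits at $(i,i')$ and $(i',i)$ (twice); and each of the remaining $n-2$ elements $k$ appears once in each line, at three cells that are pairwise distinct because $k \notin \{i,i'\}$ and $i \neq i'$, hence three times. This uses $2\cdot 2 + 3(n-2) = 3n-2$ cells, confirming the bookkeeping.

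It then remains to fill the $n^2-3n+2 = (n-1)(n-2)$ empty cells with $n-2$ more copies of element~$i$, $n-2$ more copies of element~$i'$, and $n-3$ more copies of each of the other $n-2$ elements. Placing these one element-class at a time and expanding the resulting product of binomial coefficients by their definitions gives the telescoping product $\dfrac{(n^2-3n+2)!}{(n-2)!\,(n-2)!\,(n-3)!^{\,n-2}}$; multiplying by the factor of~$2$ from the orientation of row~$i$ produces the claimed value.

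The one step that is not pure bookkeeping is the direction-forcing argument in the first paragraph, and the care there is entirely in the hypothesis $i \neq i'$: with it, each of columns $i$ and $i'$ has a uniquely determined direction once row~$i$ is oriented; without it the two columns coincide and the multiplicity count degenerates to that of Lemma~\ref{RC}. I expect this to be the main (mild) obstacle; the rest is the same telescoping computation as in the earlier lemmas.
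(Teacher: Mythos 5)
Your proposal is correct and follows essentially the same constructive count as the paper: fix the orientation of row $i$ (factor $2$), note that this forces the orientations of columns $i$ and $i'$, observe that $i$ and $i'$ have appeared twice and every other element three times, and fill the remaining $n^2-3n+2$ cells via the telescoping product of binomial coefficients. Your explicit direction-forcing argument and the caveat $i\neq i'$ (handled in the paper by Lemma~\ref{M} and the range $i\le\lfloor n/2\rfloor$ in the main proof) are only spelled out more carefully than in the paper's terse ``by similar argument to Lemma~\ref{RC}.''
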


\begin{proof}
    By similar argument to Lemma \ref{RC}, \[
    \begin{split}
    |R_i\cap C_i\cap C_{i'}| & =2\binom{n^2-3n+2}{n-2}\binom{n^2-3n+2-(n-2)}{n-2}\\
    & \qquad \cdot\prod_{k=0}^{n-3}\binom{n^2-3n+2-2(n-2)-k(n-3)}{n-3}\\
    & = \dfrac{2(n^2-3n+2)!}{(n-2)!^2(n-3)!^{n-2}}
    \end{split}
    \]
\end{proof}

\begin{lemma} \label{RRCC}
    Let $R_i\cap R_{i'}\cap C_{i}\cap C_{i'}$ be the set E$n$Ss with consecutive row $i$, consecutive row $i'$, consecutive column $i$, and consecutive column $i'$. The number of such E$n$Ss is
    \[
    |R_i\cap R_{i'}\cap C_i \cap C_{i'}|=\dfrac{2(n^2-4n+4)!}{(n-2)!^2(n-4)!^{n-2}}.
    \]
\end{lemma}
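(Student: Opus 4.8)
The plan is to mirror the constructive, multinomial-counting strategy of Lemmas \ref{RC}--\ref{RCC}: first pin down the cells forced by the four prescribed consecutive lines (rows $i$, $i'$ and columns $i$, $i'$), then count the ways to distribute the remaining symbol-copies over the leftover cells as a telescoping product of binomial coefficients. Throughout I assume $i\neq i'$; when $n$ is odd and $i=(n+1)/2$ the set in question collapses to $R_i\cap C_i$, already counted by Lemma \ref{RC}.

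The first and most delicate step is the orientation bookkeeping. The four lines pairwise intersect only in the four ``corner'' cells $(i,i)$, $(i,i')$, $(i',i)$, $(i',i')$, and at each corner the value read from the incident row and the value read from the incident column must agree. Recording each line's orientation as forward or reverse, the same corner-matching reasoning as in the proofs of Lemmas \ref{RC} and \ref{RCC} shows that consistency at $(i,i)$ forces row $i$ and column $i$ to have the \emph{same} orientation, consistency at $(i',i')$ forces row $i'$ and column $i'$ to share an orientation, while consistency at $(i,i')$ and at $(i',i)$ each forces an \emph{opposite} pairing. Solving this small system, once the orientation of row $i$ is chosen (two ways) the other three are determined, and one checks the resulting assignment is simultaneously consistent at all four corners---so there are exactly $2$ global choices, which is the leading factor of $2$.

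With orientations fixed, the four lines occupy $4n-4$ distinct cells (inclusion--exclusion over the four corners; no triple overlaps, since two rows, or two columns, never meet). A direct inspection of which symbol sits in which cell shows that the opposite symbols $i$ and $i'$ each occupy only $2$ of these cells (their row- and column-occurrences coincide, precisely at corners), while every other symbol occupies $4$ distinct cells; hence $i$ and $i'$ still need $n-2$ further copies each and the remaining $n-2$ symbols need $n-4$ further copies each, to be placed in the $n^{2}-4n+4=(n-2)^{2}$ leftover cells. Placing these copies symbol by symbol gives
\[
|R_i\cap R_{i'}\cap C_i\cap C_{i'}| = 2\binom{n^2-4n+4}{n-2}\binom{n^2-4n+4-(n-2)}{n-2}\prod_{k=0}^{n-3}\binom{n^2-4n+4-2(n-2)-k(n-4)}{n-4},
\]
which telescopes to $\dfrac{2(n^2-4n+4)!}{(n-2)!^2(n-4)!^{n-2}}$, as claimed.

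The main obstacle is the orientation analysis of the second paragraph: one must check that the four corner-consistency conditions are mutually compatible (so the count is $2$, not $0$), and then track carefully how the row- and column-occurrences of $i$ and $i'$ overlap at the corners, since an error there changes the residual multiplicities and hence the entire formula. For $n<4$ the symbols other than $i,i'$ would require a negative number of additional copies---reflecting that no such E$n$S exists---and the stated expression is to be read via the convention under which its right-hand side vanishes.
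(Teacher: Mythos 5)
Your proof is correct and follows essentially the same constructive counting as the paper's proof of Lemma \ref{RRCC} (a factor of $2$ for the forced orientations, then placing the $n-2$ remaining copies of $i$ and of $i'$ and the $n-4$ remaining copies of each other symbol in the $(n-2)^2$ leftover cells, telescoping to the stated formula). The paper simply says ``by similar argument to Lemma \ref{RC}''; your corner-consistency analysis and multiplicity bookkeeping correctly supply the details that argument leaves implicit.
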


\begin{proof}
    By similar argument to Lemma \ref{RC}, \[
    \begin{split}
    |R_i\cap R_{i'}\cap C_i\cap C_{i'}| & =2\binom{n^2-4n+4}{n-2}\binom{n^2-4n+4-(n-2)}{n-2}\\
    &\qquad\cdot\prod_{k=0}^{n-3}\binom{n^2-4n+4-2(n-2)-k(n-4)}{n-4}\\
    & = \dfrac{2(n^2-4n+4)!}{(n-2)!^2(n-4)!^{n-2}}.
    \end{split}
    \]
\end{proof}

\section{Proof of Theorem \ref{main_theorem}}
\begin{proof} \label{proof}

We proceed by repeated application of Principle of Inclusion and Exclusion. Let $R$ be the set of all E$n$Ss with at least one consecutive row. Let $C$ be the set of all E$n$Ss with at least one consecutive column. From Definition \ref{CPLS} and PIE we have that $\Sigma_n=R\cup C$ and 
\[
|\Sigma_n|=|R|+|C|-|R\cap C|.
\]
Noticing that a rotation of $90^\circ$ is a bijection from $R$ onto $C$, we then have 
\begin{equation} \label{pie1}
    |\Sigma_n|=2|R|-|R\cap C|.
\end{equation}

Some subtlety is required in finding the size of the intersection $R\cap C$. Again, we employ the fact that $R=\bigcup_{i}R_i$.
\[\begin{split}
    |R\cap C| & =\left|\left(\bigcup_iR_i\right)\cap C\right|\\
    &=|R_1\cap C|+|R_2\cap C|+\dots+|R_n\cap C|\\& 
    \quad-|R_1\cap R_2\cap C|-|R_1\cap R_2\cap C|-\dots-|R_n\cap R_n\cap C|\\
    &\quad+\dots.
\end{split}\] 
Note that we can safely ignore the higher order terms involving more than 3 intersections as---by the Pigeonhole Principle---if there are 3 or more consecutive rows, there cannot be a consecutive column. Further note that $R_i\cap R_j\cap C=\emptyset$ for $j\neq i'$, so
\begin{equation}\ \label{pie2}
    |R\cap C|=\left(\sum_{i=1}^n|R_i\cap C|\right)-\left(\sum_{i=1}^{\lfloor n/2\rfloor}|R_i\cap R_{i'}\cap C|\right).
\end{equation}

Similarly, a consecutive row $i$ precludes the existence of a consecutive column except in column $i$ or $i'$. Again by PIE, noting that $C=\bigcup_i C_i$, we have 
\[
|R_i\cap C|=|R_i\cap C_i|+|R_i\cap C_{i'}|-|R_i\cap C_i\cap C_{i'}|.
\]
Note that the map from $R_i\cap C_i$ to $R_i\cap C_{i'}$ defined by a vertical reflection and reversing the relevant row and column constitutes a bijection. Therefore the above equation can be further simplified to 
\[
|R_i\cap C|=2|R_i\cap C_i|-|R_i\cap C_i\cap C_{i'}|.
\]
Again, by the map from $R_i\cap C$ to $R_{i'}\cap C$ defined by a reflection through the horizontal axis of the array, we can rewrite $\sum_{i=1}^n|R_i\cap C|$ as
\begin{equation} \label{pie3}
\sum_{i=1}^{n}|R_i\cap C|=M+2\sum_{i=1}^{\lfloor n/2\rfloor} 2|R_i\cap C_i|-|R_i\cap C_i\cap C_{i'}|.
\end{equation}
Where $M$, is defined as in Lemma \ref{M}.

All that remains to be done is to find the size of $R_i\cap R_{i'}\cap C$. Again by PIE, we have \begin{equation} \label{pie4}
    |R_i\cap R_{i'}\cap C|=2|R_i\cap R_{i'}\cap C_i|-|R_i\cap R_{i'}\cap C_i\cap C_{i'}|.
\end{equation}
Finally, note that the map from $R_i\cap C_i\cap C_{i'}$ to $R_i\cap R_{i'}\cap C_i$ defined by a $90^{\circ}$ rotation is a bijection.

From equations \eqref{pie1} to \eqref{pie2}, we have 
\[
\begin{split}
    |\Sigma_n| & = 2|R|-|R\cap C|\\
               & = 2|R|-\left[\left(\sum_{i=1}^n|R_i\cap C|\right)-\left(\sum_{i=1}^{\lfloor n/2\rfloor}|R_i\cap R_{i'}\cap C|\right)\right]\\
               & = 2|R|-\Bigg[\left(M+2\sum_{i=1}^{\lfloor n/2\rfloor} 2|R_i\cap C_i|-|R_i\cap C_i\cap C_{i'}|\right)\\
               & \qquad -\left(\sum_{i=1}^{\lfloor n/2\rfloor}|R_i\cap R_{i'}\cap C|\right)\Bigg]\\
               & = 2|R|-\Bigg[\left(M+2\sum_{i=1}^{\lfloor n/2\rfloor} 2|R_i\cap C_i|-|R_i\cap C_i\cap C_{i'}|\right)\\
               & \qquad -\left(\sum_{i=1}^{\lfloor n/2\rfloor}2|R_i\cap R_{i'}\cap C_i|-|R_i\cap R_{i'}\cap C_i\cap C_{i'}|\right)\Bigg].
\end{split}
\]
From lemmas \ref{R} to \ref{RRCC}, this can be rewritten as 
\begin{equation} \label{maineq}
    \begin{split}
    |\Sigma_n|=&\left(\sum_{i=1}^{n}(-2)^{i+1}\binom{n}{i}\dfrac{(n^2-in)!}{(n-i)!^{n}}\right)-2\Bigg(\dfrac{(2\cdot\mathbf{1}_{2\mathbf{N}}(n+1)+4\lfloor n/2\rfloor)(n^2-2n+1)!}{(n-1)!(n-2)!^{n-1}}\\
    &\qquad-\dfrac{4\lfloor n/2\rfloor(n^2-3n+2)!}{(n-2)!^2(n-3)!^{n-2}}+\dfrac{\lfloor n/2\rfloor(n^2-4n+4)!}{(n-2)!^2(n-4)!^{n-2}}\Bigg)\\
    &=\left(\sum_{i=1}^{n}(-2)^{i+1}\binom{n}{i}\dfrac{(n^2-in)!}{(n-i)!^{n}}\right)-\dfrac{(4n)(n^2-2n+1)!}{(n-1)!(n-2)!^{n-1}}\\
    &\qquad+\dfrac{8\lfloor n/2\rfloor(n^2-3n+2)!}{(n-2)!^2(n-3)!^{n-2}}-\dfrac{2\lfloor n/2\rfloor(n^2-4n+4)!}{(n-2)!^2(n-4)!^{n-2}}.
\end{split}
\end{equation}

Thus we can calculate $P(\omega\in\Sigma_n)$ for several small values of $n$: 
\begin{table}[H]
    \centering
    \begin{tabular}{c|c}
        $n$ & $P(\omega\in\Sigma_n)$\\\hline
        2 & 1\\
        3 & 0.49047619\\
        4 & 0.090005867\\
        5 & 0.0097599064\\
        6 & $7.981508\cdot10^{-4}$\\
        7 & $5.326071\cdot10^{-5}$\\
        8 & $3.0083\cdot10^{-6}$
    \end{tabular}
    \label{table:1}
    \caption{CE$n$S probability across small dimensions.}
\end{table}
\end{proof}

\section{Proof of Corollary \ref{asymp_coro}}

\begin{proof}

The explicit formula for the size of $\Sigma_n$ in Theorem \ref{main_theorem} may be considered by some to be unwieldy or inelegant. Therefore, it behooves us to determine an asymptotic approximation of $|\Sigma_n|$. The growth of $|\Sigma_n|$ is dominated by the first term in Equation \eqref{maineq}, which is $2|R|$. Examining the sum in Lemma \ref{R}, we can similarly ignore all terms except for the first: 
\[|R|\sim (-1)^{1+1}\cdot 2^1\cdot\dfrac{n!(n^2-1n)!}{(n-1)!^{n+1}1!}.\]
Simplifying and multiplying by 2 yields our final asymptotic approximation of $|\Sigma_n|$, namely
\begin{equation}\label{approx}
    |\Sigma_n|\sim S(n):=\frac{4n\left(n^{2}-n\right)!}{\left(n-1\right)!^{n}}.
\end{equation}

The following table demonstrates the speed with which the ratio $|\Sigma_n|/S(n)$ approaches 1:
\begin{table}[h!]
    \centering
    \begin{tabular}{c|c|c|c}
        $n$ & $|\Sigma_n|$ & $S(n)$ & $|\Sigma_n|/S(n)$\\\hline
        2 & 6 & 16 & 0.375 \\
        3 & 824 & 1080 & 0.762 \\
        4 & $5.68\cdot10^6$ & $5.91\cdot10^6$ & 0.959 \\
        5 & $6.08\cdot10^{12}$ & $6.11\cdot10^{12}$ & 0.995 \\
        6 & $2.131\cdot10^{21}$ & $2.132\cdot10^{21}$ & 0.9996 \\
        7 & $3.9219\cdot10^{32}$ & $3.9220\cdot10^{32}$ & 0.99997\\
        8 & $5.46479\cdot10^{46}$ & $5.46480\cdot10^{46}$ & 0.999998 \\
        9 & $7.82733\cdot10^{63}$ & $7.82733\cdot10^{63}$ & 0.99999993\\
    \end{tabular}
    \label{table:2}
    \caption{Values of equations \eqref{maineq} and \eqref{approx} for small $n$}
\end{table}\\

Combining Lemma \ref{O} and Equation \eqref{approx}, we can also derive an approximation for $P(\omega\in\Sigma_n)$,
\[
    P(\omega\in\Sigma_n)=\dfrac{|\Sigma_n|}{|\Omega_n|}\sim\dfrac{4n\left(n^{2}-n\right)!}{\left(n-1\right)!^{n}}\cdot\left(\dfrac{n^2!}{n!^n}\right)^{-1}=\dfrac{4n^{n+1}(n^2-n)!}{n^2!}.
\]
Note that this approximation suffices to prove the true rarity of consecutive equi-$n$-squares among equi-$n$-squares, as 
\[\lim_{n\to\infty}\dfrac{4n^{n+1}(n^2-n)!}{n^2!}\leq \lim_{n\to\infty}\dfrac{4n^{n+1}}{(n^2-n+1)^n}\leq \lim_{n\to\infty}\dfrac{4n}{(n-1)^n}=0.\]
\end{proof}

\begin{remark} (An Elementary Bound for the Number of Latin Squares)

Let $Y_n:\Omega_n\to\mathbf{R}$ be a random variable counting the number of rows or columns in an E$n$S which are permutations. Note that for any Latin square $\lambda$ of order $n$, \[Y_n(\lambda)=2n.\] The number of E$n$Ss with a given row or column as a permutation is 
\[
n!\cdot\dfrac{(n^2-n)!}{(n-1)!^n}.
\]
Therefore, by Lemma \ref{O}, the first moment of $Y_n$ is \[\mathbf{E}(Y_n)=2n\cdot n!\cdot\dfrac{(n^2-n)!}{(n-1)!^n}\cdot\dfrac{n!^n}{n^2!}.\] By Markov's inequality, the probability that a randomly chosen E$n$S is a Latin square is bounded above as \[P(Y_n\geq 2n)=P(Y_n=2n)\leq n!\cdot\dfrac{(n^2-n)!}{(n-1)!^n}\cdot\dfrac{n!^n}{n^2!}.\]

Note that this bound on the probability could be used to bound the number of Latin squares of order $n$, $L_n$, as by Lemma \ref{O}, \[\dfrac{L_n}{|\Omega_n|}\leq n!\cdot\dfrac{(n^2-n)!}{(n-1)!^n}\cdot\dfrac{n!^n}{n^2!}\implies L_n\leq n!\cdot\dfrac{(n^2-n)!}{(n-1)!^n}.\] However, this bound is considerably weaker than the classical result: \[L_n\leq \prod_{k=1}^{n}(k!)^{n/k},\] as related by van Lint and Wilson in \cite{courseincombo} and is therefore of limited utility apart from the ease of its derivation.
\end{remark}

\section{Proof of Theorem \ref{distro_theorem}} \label{distro}

Consider the random variable $X_n:\Omega_n\to \mathbf{R}$ which counts the number of consecutive rows or columns in a random E$n$S $\omega$. From Theorem \ref{main_theorem}, observe that the number of E$n$Ss grows much faster than the number of CE$n$Ss---as exemplified by the values we calculated in Table \ref{table:1}. Therefore, as $n\to\infty$, the probability mass function of $X_n$ will approach the (discrete analog of the) Dirac distribution. However, for small values of $n$, the PMF of $X_n$ resists such a trivial characterization.

Let $x$ be the number of consecutive rows and columns in an E$n$S and consider the case when $x=0$. Applying Theorem \ref{main_theorem}, we can determine that the probability of generating an E$n$S that is not a CE$n$S:
\begin{equation}\label{pmf0}
\begin{split}
    P(X_n=0)&=|\Omega_n|^{-1}\cdot |\Omega_n\setminus\Sigma_n|=1-|\Sigma_n|\cdot|\Omega_n|^{-1}\\&
    \sim 1-\dfrac{4n^{n+1}(n^2-n)!}{n^2!}.
\end{split}
\end{equation}

The cases when $x$ is sufficiently large also prove rather simple. Recall that when $x>4$, there cannot be both consecutive rows and consecutive columns. Therefore, we need only consider the cases where there are either $x$ consecutive columns or $x$ consecutive rows. Such a distinction can be thought of constructively as a binary choice. Subsequently, there are $n$ possible choices for the location of our consecutive columns or rows. All that remains is to determine the position of the remaining $n-x$ copies of the elements. However, care must be taken to avoid placing these elements in a manner which would result in any additional consecutive rows or columns. Proceeding by the Principle of Inclusion and Exclusion, we consider the number of ways to fill the remaining rows, subtract those ways which would result in another given consecutive row, etc. Taking all of this into account, we can define our PMF at $x>4$ as
\begin{equation}\label{pmfx>4}
\begin{split}
    |\Omega_n|P(X_n=x) &= 2^{x+1}\binom{n}{x}\Bigg(\prod_{i=0}^{n-1}\binom{n^2-nx-i(n-x)}{n-x}\\
    &\qquad-\sum_{k=1}^{n-x}(-1)^{k+1}2^k\binom{n-x}{k}\dfrac{(n^2-nx-nk)!}{(n-x-k)!^n}\Bigg)\\
    &=2^{x+1}\binom{n}{x}\Bigg(\dfrac{(n^2-nx)!}{(n-x)!^n}\\
    &\qquad+\sum_{k=1}^{n-x}(-2)^k\binom{n-x}{k}\dfrac{(n^2-nx-nk)!}{(n-x-k)!^n}\Bigg)\\
    \implies P(X_n = x) &= \dfrac{n!^n}{n^2!}2^{x+1}\binom{n}{x}\sum_{k=0}^{n-x}(-2)^k\binom{n-x}{k}\dfrac{(n^2-nx-nk)!}{(n-x-k)!^n}.
\end{split}
\end{equation}

When $x=4$, we follow a similar procedure. Supposing that our four consecutive rows or columns are all rows or all columns, we have
\begin{equation} \label{pmf4 straighaway}
\begin{split}
    &2^5\binom{n}{4}\left(\prod_{i=0}^{n-1}\binom{n^2-4n-i(n-4)}{n-4}+\sum_{k=1}^{n-4}(-2)^k\binom{n-4}{k}\dfrac{(n^2-4n-nk)!}{(n-k-4)!^n}\right)\\
    =&2^5\binom{n}{4}\sum_{k=0}^{n-4}(-2)^k\binom{n-4}{k}\dfrac{(n^2-4n-nk)!}{(n-k-4)!^n}
\end{split}
\end{equation}
possible arrangements. However, it is possible to have four consecutive rows or columns that are not all rows or all columns. If we pick row $i$ to be consecutive, then row $i'$, column $i$, and column $i'$ may also be consecutive. Note that if $n$ is odd, our initial choice of row $i$ is restricted to the $n-1$ rows that are not the middle row. Where these consecutive rows and columns intersect, there will be elements that only appear twice, whereas all other elements have already appeared four times. So, being careful to account for this, we complete our construction by placing these elements appearing twice, and the rest. This is similar to the content of Lemma \ref{RRCC}. Therefore, the total number of ways that such E$n$S may be constructed is 
\begin{equation} \label{pmf4 crisscross}
\begin{split}
    &(n+\mathbf{1}_{2\mathbf{N}}(n)-1)\binom{n^2-4n+4}{n-2}\binom{n^2-4n+4-(n-2)}{n-2}\\
    &\qquad\cdot\prod_{j=0}^{n-3}\binom{n^2-4n+4-2(n-2)-j(n-4)}{n-4}\\
    =&\dfrac{(n+\mathbf{1}_{2\mathbf{N}}(n)-1)(n^2-4n+4)!}{(n-2)!^2(n-4)!^{n-2}}.
\end{split}
\end{equation}

The value of the PMF at $x=4$ is simply the sum of expressions \eqref{pmf4 straighaway} and \eqref{pmf4 crisscross} divided by the total number of E$n$Ss:
\begin{equation}\label{pmf4}
\begin{split}
    P(X_n=4)=&\dfrac{n!^n}{n^2!}\Bigg(2^5\binom{n}{4}\left(\sum_{k=0}^{n-4}(-2)^k\binom{n-4}{k}\dfrac{(n^2-4n-nk)!}{(n-k-4)!^n}\right)\\
    &\qquad+\dfrac{(n+\mathbf{1}_{2\mathbf{N}}(n)-1)(n^2-4n+4)!}{(n-2)!^2(n-4)!^{n-2}}\Bigg).
\end{split}
\end{equation}

When $x=3$, there are two possible patterns the E$n$S may follow: either there are two consecutive rows and one consecutive column (or one consecutive row and two consecutive columns) or there are three consecutive rows/columns. 

The first case is the more delicate. First, we must choose whether there will be two consecutive rows and one consecutive column or one consecutive row and two consecutive columns. Assume, without loss of generality, that we decide that there should be two consecutive rows. After this, we must decide the location of one of these rows, say row $i$, out of the $n+\mathbf{1}_{2\mathbf{N}}(n)-1$ available rows. We can assume row $i$ is not reverse-consecutive, because---if there is to be a consecutive column---the other consecutive row must be located at row $i'$ and be in reverse-consecutive order. There are now two choices for the location of the consecutive column, column $i$ or column $i'$. Suppose we choose column $i$. Once that choice has been made, we fill the array with the remaining elements. Some care must be taken because elements $i$ and $i'$ have only been used twice, whereas all other elements have been used three times. However, further attention must be given to the possibility that---in placing the remaining elements---column $i'$ is filled consecutively. If this were to occur, we would have two consecutive columns and two consecutive rows. In order to avoid counting these arrangements, we may merely subtract by expression \eqref{pmf4 crisscross} four times. We subtract four times because each E$n$S with two consecutive rows and two consecutive columns is over-counted four times. Take for example the following E$4$S, which has two consecutive rows and two consecutive columns:

\[\begin{array}{|c|c|c|c|}\hline
    1 & 2 & 3 & 4\\\hline
    2 & \cdot & \cdot & 3\\\hline
    3 & \cdot & \cdot & 2\\\hline
    4 & 3 & 2 & 1\\\hline
\end{array}.\]

This E$4$S corresponds to 4 E$4$Ss with either two consecutive rows and one consecutive column or one consecutive row and two consecutive columns

\[\begin{array}{|c|c|c|c|}\hline
    1 & 2 & 3 & 4\\\hline
    \cdot & \cdot & \cdot & 3\\\hline
    \cdot & \cdot & \cdot & 2\\\hline
    4 & 3 & 2 & 1\\\hline
\end{array},\;\begin{array}{|c|c|c|c|}\hline
    1 & \cdot & \cdot & 4\\\hline
    2 & \cdot & \cdot & 3\\\hline
    3 & \cdot & \cdot & 2\\\hline
    4 & 3 & 2 & 1\\\hline
\end{array},\;\begin{array}{|c|c|c|c|}\hline
    1 & 2 & 3 & 4\\\hline
    2 & \cdot & \cdot & \cdot\\\hline
    3 & \cdot & \cdot & \cdot\\\hline
    4 & 3 & 2 & 1\\\hline
\end{array},\;\text{and}\;\begin{array}{|c|c|c|c|}\hline
    1 & 2 & 3 & 4\\\hline
    2 & \cdot & \cdot & 3\\\hline
    3 & \cdot & \cdot & 2\\\hline
    4 & \cdot & \cdot & 1\\\hline
\end{array}.\]

Following this procedure, we have
\begin{equation} \label{pmf3 crisscross}
    \begin{split}
        &4(n+\mathbf{1}_{2\mathbf{N}}(n)-1)\binom{n^2-3n+2}{n-2}\binom{n^2-3n+2-(n-2)}{n-2}\\
        &\qquad\cdot\left(\prod_{i=0}^{n-3}\binom{n^2+2n-2(n-2)-i(n-3)}{n-3}\right)-\dfrac{4(n+\mathbf{1}_{2\mathbf{N}}(n)-1)(n^2-4n+4)!}{(n-2)!^2(n-4)!^{n-2}}\\
        =&\dfrac{4(n+\mathbf{1}_{2\mathbf{N}}(n)-1)(n^2-3n+2)!}{(n-2)!^2(n-3)!^{n-2}}-\dfrac{4(n+\mathbf{1}_{2\mathbf{N}}(n)-1)(n^2-4n+4)!}{(n-2)!^2(n-4)!^{n-2}}
    \end{split}
\end{equation}
total arrangements.

The case when there are three consecutive rows or three consecutive columns is simpler, and is directly analogous to the derivation of expression \eqref{pmf4 straighaway}. The number of such E$n$Ss is 
\begin{equation} \label{pmf3 straighaway}
        2^4\binom{n}{3}\left(\sum_{k=0}^{n-3}(-2)^k\binom{n-3}{k}\dfrac{(n^2-3n-nk)!}{(n-k-3)!^n}\right).
\end{equation}

The value of the PMF at $x=3$ is simply the sum of expressions \eqref{pmf3 crisscross} and \eqref{pmf3 straighaway} divided by the total number of E$n$S:
\begin{equation}\label{pmf3}
\begin{split}
    P(X_n=3)=&\dfrac{n!^n}{n^2!}\Bigg(2^4\binom{n}{3}\left(\sum_{k=0}^{n-3}(-2)^k\binom{n-3}{k}\dfrac{(n^2-3n-nk)!}{(n-k-3)!^n}\right)\\
    &\qquad+\dfrac{4(n+\mathbf{1}_{2\mathbf{N}}(n)-1)(n^2-3n+2)!}{(n-2)!^2(n-3)!^{n-2}}\\
    &\qquad-\dfrac{4(n+\mathbf{1}_{2\mathbf{N}}(n)-1)(n^2-4n+4)!}{(n-2)!^2(n-4)!^{n-2}}\Bigg).
\end{split}
\end{equation}

The case when $x=2$ is similar. Either there is one consecutive row and one consecutive column or there are two consecutive rows or columns.

The first case, again, is more delicate. If $n$ is even, it is clear that there are $n$ choices for the location for the consecutive row $i$. Then, one may choose among columns $i$ or $i'$ to be consecutive. If $n$ is odd, some special attention must be given to the possibility that the row chosen to be consecutive is row $(n+1)/2$. However, if said row is chosen, the fact that the choice of consecutive column is fixed is counteracted by the fact that said column may be consecutive or reverse consecutive. Once the consecutive row and column are fixed, we place the remaining elements.

The second case, where there are two consecutive rows or columns, necessitates similar arguments to those above. Constructing an arbitrary element requires first determining if rows or columns will be consecutive, then what order they will be in. Finally, we choose which rows or columns will be consecutive and permute the remaining elements. Being careful to avoid creating anymore consecutive columns or rows.

To avoid counting the E$n$Ss with 3 or more consecutive rows or columns, we subtract by the quantities in expression \eqref{pmf3 crisscross} three times.  We subtract three times because each E$n$S with two consecutive rows and one consecutive column or one consecutive row and two consecutive columns is over-counted thrice. Consider the following diagram:

\footnotesize
\renewcommand*{\arraystretch}{1.4375}
\begin{center}\hspace*{-0.75cm}
    \begin{tikzpicture}[scale=0.75]
        \draw
        (-7.5,0) node (a1) {$\begin{array}{|c|c|c|c|}\hline
            1 & 2 & 3 & 4 \\\hline
            \hspace{1ex} & \hspace{1ex} & \hspace{1ex} & 3 \\\hline
            \hspace{1ex} & \hspace{1ex} & \hspace{1ex} & 2 \\\hline
            \hspace{1ex} & \hspace{1ex} & \hspace{1ex} & 1 \\\hline
        \end{array}$}
        (-4.5,0) node (a2) {$\begin{array}{|c|c|c|c|}\hline
            1 & 2 & 3 & 4 \\\hline
            2 & \hspace{1ex} & \hspace{1ex} & \hspace{1ex} \\\hline
            3 & \hspace{1ex} & \hspace{1ex} & \hspace{1ex} \\\hline
            4 & \hspace{1ex} & \hspace{1ex} & \hspace{1ex} \\\hline
        \end{array}$}
        (-1.5,0) node (a3) {$\begin{array}{|c|c|c|c|}\hline
            1 & \hspace{1ex} & \hspace{1ex} & \hspace{1ex} \\\hline
            2 & \hspace{1ex} & \hspace{1ex} & \hspace{1ex} \\\hline
            3 & \hspace{1ex} & \hspace{1ex} & \hspace{1ex} \\\hline
            4 & 3 & 2 & 1 \\\hline
        \end{array}$}
        (1.5,0) node (a4) {$\begin{array}{|c|c|c|c|}\hline
            \hspace{1ex} & \hspace{1ex} & \hspace{1ex} & 4 \\\hline
            \hspace{1ex} & \hspace{1ex} & \hspace{1ex} & 3 \\\hline
            \hspace{1ex} & \hspace{1ex} & \hspace{1ex} & 2 \\\hline
            4 & 3 & 2 & 1 \\\hline
        \end{array}$}
        (4.5,0) node (a5) {$\begin{array}{|c|c|c|c|}\hline
            1 & \hspace{1ex} & \hspace{1ex} & 4 \\\hline
            2 & \hspace{1ex} & \hspace{1ex} & 3 \\\hline
            3 & \hspace{1ex} & \hspace{1ex} & 2 \\\hline
            4 & \hspace{1ex} & \hspace{1ex} & 1 \\\hline
        \end{array}$}
        (7.5,0) node (a6) {$\begin{array}{|c|c|c|c|}\hline
            1 & 2 & 3 & 4 \\\hline
            \hspace{1ex} & \hspace{1ex} & \hspace{1ex} & \hspace{1ex} \\\hline
            \hspace{1ex} & \hspace{1ex} & \hspace{1ex} & \hspace{1ex} \\\hline
            4 & 3 & 2 & 1 \\\hline
        \end{array}$}
        (-4.5,-4.5) node (a7) {$\begin{array}{|c|c|c|c|}\hline
            1 & 2 & 3 & 4 \\\hline
            \hspace{1ex} & \hspace{1ex} & \hspace{1ex} & 3 \\\hline
            \hspace{1ex} & \hspace{1ex} & \hspace{1ex} & 2 \\\hline
            4 & 3 & 2 & 1 \\\hline
        \end{array}$}
        (-1.5,-4.5) node (a8) {$\begin{array}{|c|c|c|c|}\hline
            1 & 2 & 3 & 4 \\\hline
            2 & \hspace{1ex} & \hspace{1ex} & 3 \\\hline
            3 & \hspace{1ex} & \hspace{1ex} & 2 \\\hline
            4 &  &  & 1 \\\hline
        \end{array}$}
        (1.5,-4.5) node (a9) {$\begin{array}{|c|c|c|c|}\hline
            1 & 2 & 3 & 4 \\\hline
            2 & \hspace{1ex} & \hspace{1ex} &  \\\hline
            3 & \hspace{1ex} & \hspace{1ex} &  \\\hline
            4 & 3 & 2 & 1 \\\hline
        \end{array}$}
        (4.5,-4.5) node (a10) {$\begin{array}{|c|c|c|c|}\hline
            1 &  &  & 4 \\\hline
            2 & \hspace{1ex} & \hspace{1ex} & 2 \\\hline
            3 & \hspace{1ex} & \hspace{1ex} & 3 \\\hline
            4 & 3 & 2 & 1 \\\hline
        \end{array}$}
        (0,-9) node (a11) {$\begin{array}{|c|c|c|c|}\hline
            1 & 2 & 3 & 4 \\\hline
            2 & \hspace{1ex} & \hspace{1ex} & 2 \\\hline
            3 & \hspace{1ex} & \hspace{1ex} & 3 \\\hline
            4 & 3 & 2 & 1 \\\hline
        \end{array}$};

        \draw (-7.5,-1.5) -- (-4.5, -3);
        \draw (-7.5,-1.5) -- (-1.5, -3);
        \draw (-4.5,-1.5) -- (-1.5,-3);
        \draw (-4.5,-1.5) -- (1.5,-3);
        \draw (-1.5,-1.5) -- (1.5,-3);
        \draw (-1.5,-1.5) -- (4.5,-3);
        \draw (1.5,-1.5) -- (-4.5, -3);
        \draw (1.5,-1.5) -- (4.5,-3);
        \draw (4.5,-1.5) -- (-1.5,-3);
        \draw (4.5,-1.5) -- (4.5,-3);
        \draw (7.5,-1.5) -- (-4.5,-3);
        \draw (7.5,-1.5) -- (1.5,-3);
        \draw (-4.5,-6) -- (0,-7.5);
        \draw (-1.5,-6) -- (0,-7.5);
        \draw (1.5,-6) -- (0,-7.5);
        \draw (4.5,-6) -- (0,-7.5);
\end{tikzpicture}
\end{center}

\normalsize
\newpage

Thus, the number of E$n$Ss with two consecutive rows or columns is

\begin{equation}\label{pmf2 crisscross and straightaway}
    \begin{split}
        &\dfrac{4n(n^2-2n+1)!}{(n-1)!(n-2)!^{n-1}}+2^3\binom{n}{2}\left(\sum_{k=0}^{n-2}(-2)^k\binom{n-2}{k}\dfrac{(n^2-2n-nk)!}{(n-k-2)!^n}\right)
        \\&-3\bigg(\dfrac{4(n+\mathbf{1}_{2\mathbf{N}}(n)-1)(n^2-3n+2)!}{(n-2)!^2(n-3)!^{n-2}}-\dfrac{4(n+\mathbf{1}_{2\mathbf{N}}(n)-1)(n^2-4n+4)!}{(n-2)!^2(n-4)!^{n-2}}\bigg).
    \end{split}
\end{equation}

So, the value of the PMF at $x=2$ is simply \eqref{pmf2 crisscross and straightaway} divided by the total number of E$n$Ss:

\begin{equation}\label{pmf2}
    \begin{split}
        P(X_n=2)=&\dfrac{n!^n}{n^2!}\Bigg(\dfrac{4n(n^2-2n+1)!}{(n-1)!(n-2)!^{n-1}}+2^3\binom{n}{2}\left(\sum_{k=0}^{n-2}(-2)^k\binom{n-2}{k}\dfrac{(n^2-2n-nk)!}{(n-k-2)!^n}\right)\\
        &\qquad-3\bigg(\dfrac{4(n+\mathbf{1}_{2\mathbf{N}}(n)-1)(n^2-3n+2)!}{(n-2)!^2(n-3)!^{n-2}}\\
        &\qquad-\dfrac{4(n+\mathbf{1}_{2\mathbf{N}}(n)-1)(n^2-4n+4)!}{(n-2)!^2(n-4)!^{n-2}}\bigg)\Bigg).
    \end{split}
\end{equation}

All that remains to be done is to determine the value of the PMF at $x=1$. This can be done crudely and without too much of a sophisticated argument, subtracting the number of E$n$Ss with more than one consecutive row or column from the number of E$n$Ss with at least one consecutive row or column, $|\Sigma_n|$. Doing so amounts to combining the main result of theorem \ref{main_theorem}, equation \eqref{maineq}, with equations \eqref{pmfx>4}, \eqref{pmf4}, \eqref{pmf3}, and \eqref{pmf2} to get

\begin{equation}
    \begin{split}
        P(X_n=1)=\dfrac{n!^n}{n^2!}&\Bigg(\left(n\sum_{i=1}^{n}(-2)^{i+1}\binom{n-1}{i-1}\dfrac{(n^2-in)!}{(n-i)!^n}\right)\\
        &-\dfrac{(8n)(n^2-2n+1)!}{(n-1)!(n-2)!^{n-1}}\\
        &+\dfrac{8(\mathbf{1}_{2\mathbf{N}}(n)+n+\lfloor n/2\rfloor-1)(n^2-3n+2)!}{(n-2)!^2(n-3)!^{n-2}}\\
        &-\dfrac{(9\cdot\mathbf{1}_{2\mathbf{N}}(n)+9n+2\lfloor n/2\rfloor-9)(n^2-4n+4)!}{(n-2)!^2(n-4)!^{n-2}}\Bigg).
    \end{split}
\end{equation}

Combining the above with equations \eqref{pmf0}, \eqref{pmfx>4}, \eqref{pmf4}, \eqref{pmf3}, and \eqref{pmf2} yields the PMF stated in theorem \ref{distro_theorem}.\qed

\section{Remark on the Algebra of Consecutive Equi-$n$-Squares} \label{algebraic interpretations}
It is well known that Latin squares can be interpreted as the Cayley tables of quasigroups \cite{SmallLSsQGsLs}. Given that E$n$Ss are less structured, it is intuitive that their algebraic interpretation would be an object with fewer axioms. In fact, E$n$Ss are the Cayley tables of certain magmas. (Note that not all magmas correspond to an E$n$S, however.) CE$n$Ss correspond to magmas which are nearly unital. 

Precisely, if row $i$ is consecutive, then left multiplication by $i$ is the identity map. If column $i$ is consecutive, then right multiplication by $i$ is the identity map. If row or column $i$ is reverse-consecutive, then right or left multiplication (respectively) with $i$ is the identity map if one takes the ``reverse'' of the image. More exactly, consider a magma $(M,\circ)$ with its Cayley table a CE$n$S. Let row $i$ be reverse-consecutive and define the map
\[
\begin{split}
    I:&M\to M\\
    &m\mapsto i\circ m
\end{split}
\]
where $m$ is any element of $M$. Let $R$ be the reversing map upon the elements of $M$ sending the $n^{\text{th}}$ element of $M$ to the $(|M|-n+1)^{\text{th}}$ element of $M$. The composition $R\circ I$ is the identity map $\iota:M\to M$. A similar argument explains the case when the CE$n$S has a reverse-consecutive column. As far as we are aware, this property of being guaranteed (an element akin to) a left or right identity, is yet to have a common name in the relevant literature. Again, not every magma with this property would have a CE$n$S as its Cayley table. However, the following Theorems \ref{no QGs are Ls} and \ref{completely simple} do provide some concrete connections between equi-$n$-squares and finite algebra.

\section{Proof of Theorem \ref{no QGs are Ls}}

\begin{proof}
    For a given row of a randomly generated Latin square, there are $n!$ possible arrangements. Of these $n!$ arrangements, only one is consecutive but not reverse-consecutive. The chance, therefore, that any row is consecutive is $1/n!$. By the linearity of the expectation, the expected number of consecutive or reverse-consecutive rows or columns in a randomly generated order $n$ Latin square is $2n/n!$. 
    
    By Markov's inequality, the probability that there exists at least two such rows or columns in a randomly generated Latin square is less than $n/n!$. Of course, as $n\to\infty$ this probability swiftly approaches $0$, implying almost no quasigroups are loops, confirming the pattern apparent in Table 3 of \cite{SmallLSsQGsLs}.
\end{proof}

\section{Proof of Theorem \ref{completely simple}}
\begin{proof}
    Suppose that $S$ is a finite (completely) simple semigroup. By Rees' theorem for completely simple semigroups, \[S\cong M(G;I,\Lambda;P),\] for some index sets $I,\Lambda$, group $G$, and matrix $P$ with non-zero entries from $G$ \cite{HowieSemigroups}. Elements of $M(G;I,\Lambda;P)$ are triplets $(i,g,\lambda)$ with $(i,g,\lambda)\in I\times G\times \Lambda$ and the operation on $M(G;I,\Lambda;P)$ is defined by \[(i,g,\lambda)(j,t,\mu)=(i,gp_{\lambda,j}h,\mu),\] where $p_{\lambda,\mu}\in G$.
    
    Define $\mu$ to be the multiplication map on  $\mu:S\times S\to S$. It suffices to show that every fiber of $\mu$ has size $n$. Consider an arbitrary element $(i_0,g_0,\lambda_0)$ of $M(G;I,\Lambda;P)$. Any preimage of $(i_0,g_0,\lambda_0)$ must be of the form \[(i_0,g_1,\lambda_1)(j_1,h_1,\lambda_0).\] We may therefore choose any $j_1\in I$ and $\lambda_1\in \Lambda$. Finally, as $p_{\lambda_1j_1}\neq 0$ is non-zero, any choice of $g_1\in G$ fixes $h_1$ to be $g_1^{-1}p_{\lambda_1j_1}^{-1}g_0$. Therefore, by isomorphism, any fiber of $\mu$ is of size \[|I|\cdot|G|\cdot|\Lambda|=|I\times G\times \Lambda|=|S|=n.\]
\end{proof}

\section{Proof of Corollary \ref{nonLatin_coro}}

It may not be immediately obvious that an equi-$n$-square that can be interpreted as an associative Cayley table is not simply a Latin square, given the more complicated relationship between associativity and the structure of the Cayley table than unity or commutativity. However, the above proof shows that there are such highly ordered equi-$n$-squares that are not Latin. 

\begin{proof}
    Consider a finite simple semigroup $S$ that is not also a group. By the above Theorem \ref{completely simple}, the Cayley table of $S$ is an associative equi-$n$-square. However, it cannot also be a Latin square, because then $S$ would also be a group.
\end{proof}

Enumerating such associative equi-$n$-squares remains an open problem.

\section{Code and Monte-Carlo Results} \label{code}
The following python code implements a Monte-Carlo simulation to estimate the probabilities in Theorem \ref{distro_theorem} for $n\in\{3,4,5\}$, generating $10^8$ uniformly random E$n$Ss for each value of $n$ and creating plots of the differences between the experimental and theoretical probabilities:

\footnotesize\begin{lstlisting}[language=Python,breaklines]
import random
import matplotlib.pyplot as plt
from tqdm import tqdm

def generate_elements(dimension):
  elements = []
  for i in range(dimension):
    [elements.append(i + 1) for _ in range(dimension)]
  return elements

def generate_matrix(elements, dimension):
  random.shuffle(elements)
  matrix = []
  for i in range(dimension):
    row = []
    for j in range(dimension):
      row.append(elements[i * dimension + j])
    matrix.append(row)
  return matrix

def check_row(matrix, dimension, reference):
  hits = 0
  locations = []
  for i in range(dimension):
    row = matrix[i]
    if (row == reference) or (row == reference[::-1]):
      hits += 1
      locations.append(i)
  return (hits, locations)

def check_column(matrix, dimension, reference):
  hits = 0
  locations = []
  for i in range(dimension):
    column = [matrix[j][i] for j in range(dimension)]
    if (column == reference) or (column == reference[::-1]):
      hits += 1
      locations.append(i)
  return (hits, locations)


def check_matrix(matrix, dimension, reference):
  row_hits = check_row(matrix, dimension, reference)[0]
  col_hits = check_column(matrix, dimension, reference)[0]
  return (row_hits, col_hits, row_hits + col_hits)

def main(dimension, iterations):
  elements = generate_elements(dimension)
  reference = [i + 1 for i in range(dimension)]
  distribution = [0]*(dimension + 1)
  running_probs = []
  for i in tqdm(range(iterations)):
    matrix = generate_matrix(elements, dimension)
    consec_count_tots = check_matrix(matrix, dimension, reference)[2]
    distribution[consec_count_tots] += 1
    running_probs.append([count/(i+1) for count in distribution])
  return(running_probs)


calculated_probs =
[[0.509523809524,0.357142857143,0.114285714286,0.0190476190476],
[0.909994132851,0.0863048063048,0.0036336996337,0.0000673612102184],
[0.990240093585,0.00971688511463,0.0000429497896608,0.000000071510797707]]

def makebounds(p):
  s = 5*((p-p**2)/iterations)**0.5
  bound = max(s, 0.01*p)
  return([-bound, bound])

dimension = 5
iterations = 100000000
resolution = 10000
running_probs = main(dimension, iterations)
unzipped_probs = []
for consecutive in range(len(running_probs[0])):
  unzipped_probs.append([probs[consecutive] for probs in running_probs])

small_sum_probs = [0]*iterations
for i in tqdm(range(len(unzipped_probs) - 3)):
  for j in range(iterations):
    small_sum_probs[j] += unzipped_probs[i + 3][j]

relative_probs = [[probs - calculated_probs[dimension - 3][consecutive] for probs in unzipped_probs[consecutive]] for consecutive in range(3)]

relative_probs.append([probs - calculated_probs[dimension - 3][3] for probs in small_sum_probs])

x_values = [int(resolution*x) for x in range(int(iterations/resolution))]
y_values = relative_probs
dummy_x = [0,iterations-resolution]

lines = []
fig, axs = plt.subplots(2, 2)
fig.tight_layout()

axs[0,0].plot(x_values, [y_values[0][y] for y in range(iterations) if (y % resolution == 0)])
axs[0,0].plot(dummy_x, [0, 0], color='black', linestyle='-')
axs[0,0].set_title(r'$x=0$')
axs[0,0].set_xlabel('Iterations')
axs[0,0].set_ylabel(r'Experimental Probability - $p_{X_'+ str(dimension) + r'}(0)$')
axs[0,0].set_ylim(makebounds(calculated_probs[dimension - 3][0]))

axs[0,1].plot(x_values, [y_values[1][y] for y in range(iterations) if (y % resolution == 0)], 'tab:orange')
axs[0,1].plot(dummy_x, [0, 0], color='black', linestyle='-')
axs[0,1].set_title(r'$x=1$')
axs[0,1].set_xlabel('Iterations')
axs[0,1].set_ylabel(r'Experimental Probability - $p_{X_'+ str(dimension) + r'}(1)$')
axs[0,1].set_ylim(makebounds(calculated_probs[dimension - 3][1]))

axs[1,0].plot(x_values, [y_values[2][y] for y in range(iterations) if (y % resolution == 0)], 'tab:green')
axs[1,0].plot(dummy_x, [0, 0], color='black', linestyle='-')
axs[1,0].set_title(r'$x=2$')
axs[1,0].set_xlabel('Iterations')
axs[1,0].set_ylabel(r'Experimental Probability - $p_{X_'+ str(dimension) + r'}(2)$')
axs[1,0].set_ylim(makebounds(calculated_probs[dimension - 3][2]))

axs[1,1].plot(x_values, [y_values[3][y] for y in range(iterations) if (y % resolution == 0)], 'tab:red')
axs[1,1].plot(dummy_x, [0, 0], color='black', linestyle='-')
axs[1,1].set_title(r'$x \geq 3$')
axs[1,1].set_xlabel('Iterations')
axs[1,1].set_ylabel(r'Experimental Probability - $\sum_{i=3}^{\infty} p_{X_' + str(dimension) + r'}(i)$')
axs[1,1].set_ylim(makebounds(calculated_probs[dimension - 3][3]))

plt.show()
\end{lstlisting}
\normalsize

The following are the plots produced by the above code for $n\in\{3,4,5\}$. The three plots in each graphic show the difference between experimental probability and the probability predicted by the result of Theorem \ref{distro_theorem} as the simulation progresses. Please take note that the code above scales the $y$-axis in each plot relative to the probability. Specifically, $y$ belongs to the interval centered at zero with width \[\max\left\{5\sqrt{\dfrac{p(1-p)}{N}},\dfrac{p}{100}\right\},\] where $p$ is the relevant theoretical probability and $N$ is the number of iterations in the simulation (i.e. the number of sampled E$n$Ss), for our purposes $N=10^8$.

\begin{figure}[h!]
    \centering
    \includegraphics[width=1\linewidth]{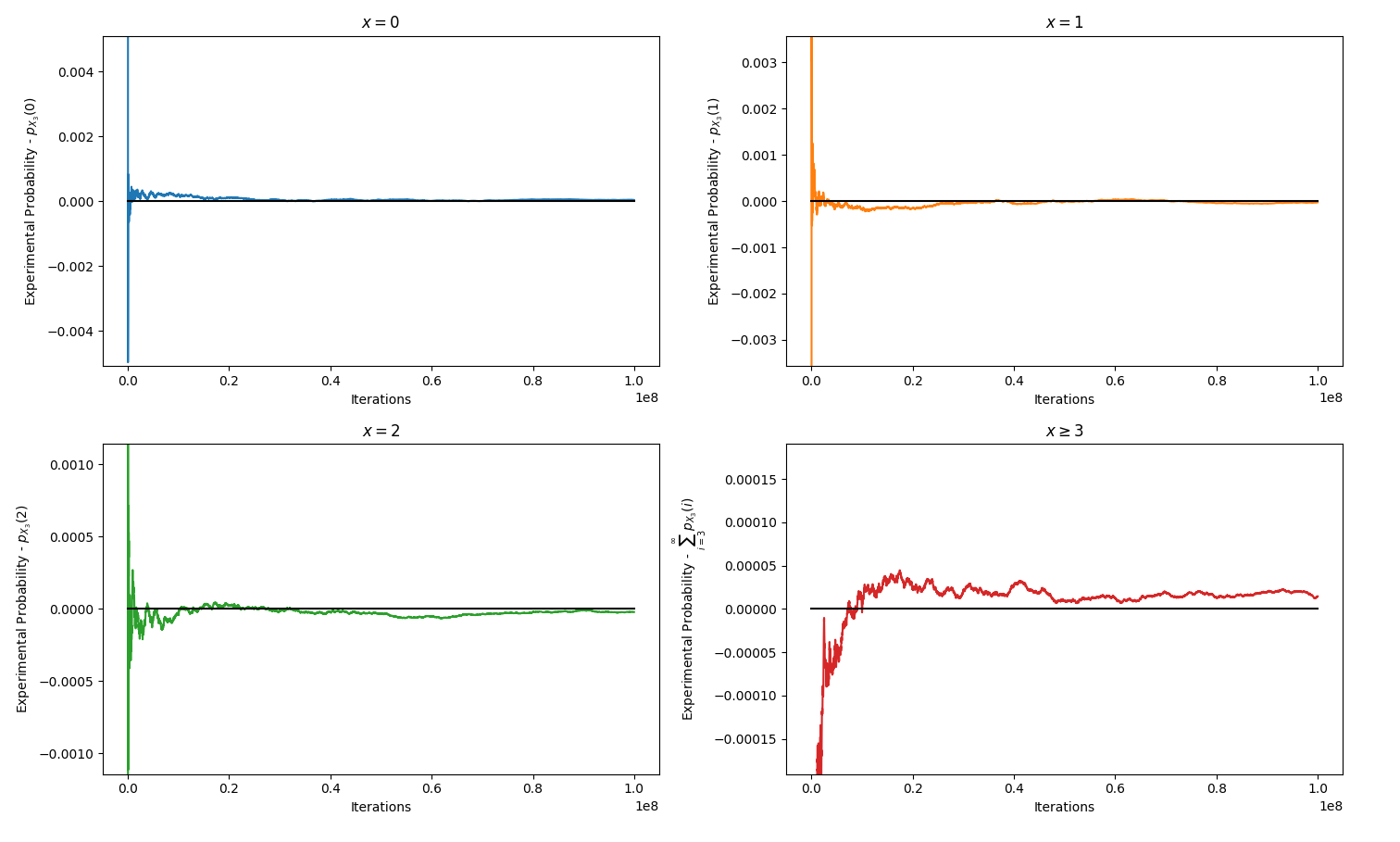}
    \caption{Differences between simulated and predicted probabilities for $p_{X_3}(x)$}
    \label{fig:mcplot3}
\end{figure}

\begin{figure}[h!]
    \centering
    \includegraphics[width=1\linewidth]{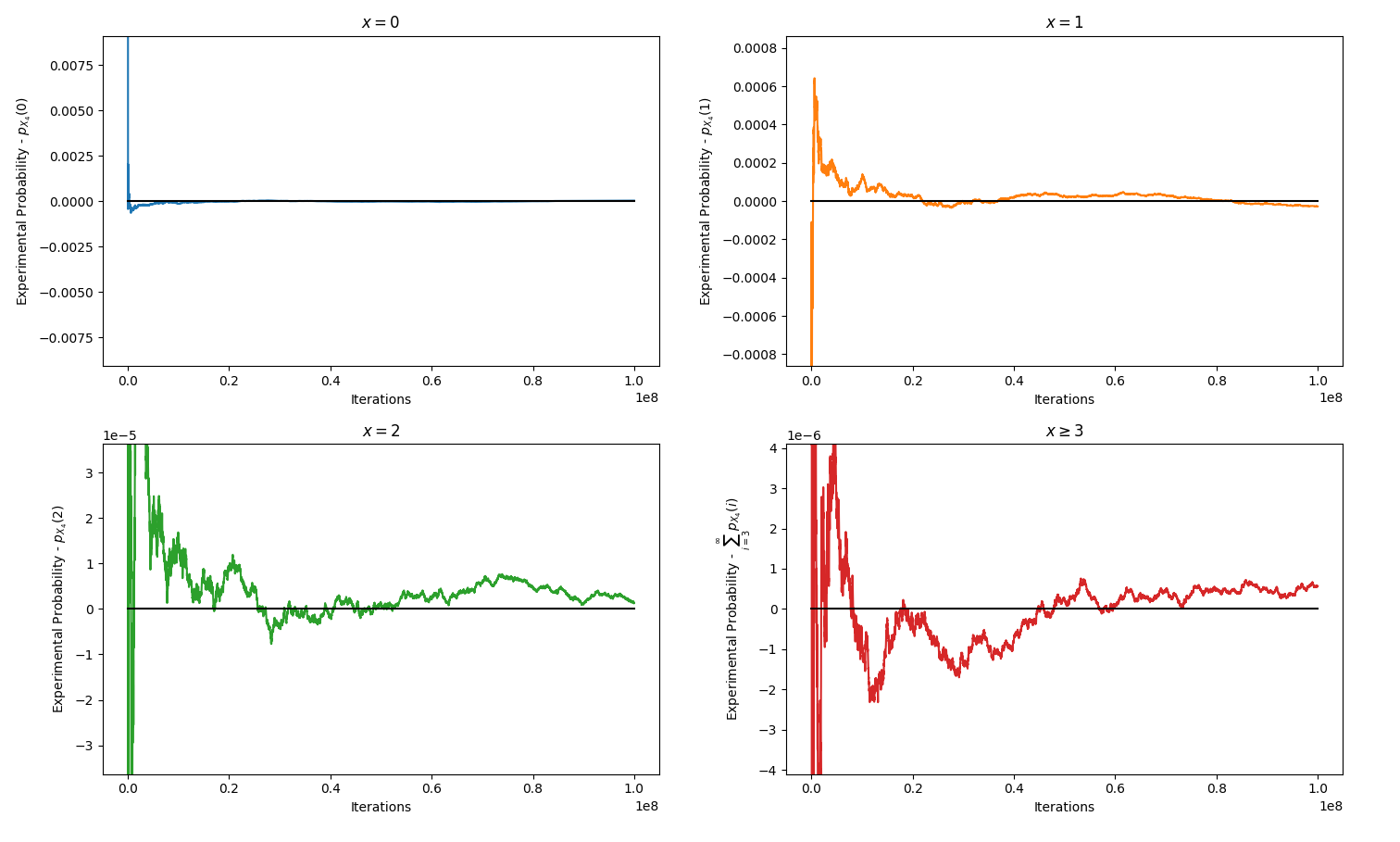}
    \caption{Differences between simulated and predicted probabilities for $p_{X_4}(x)$}
    \label{fig:mcplot4}
\end{figure}

\begin{figure}[h!]
    \centering
    \includegraphics[width=1\linewidth]{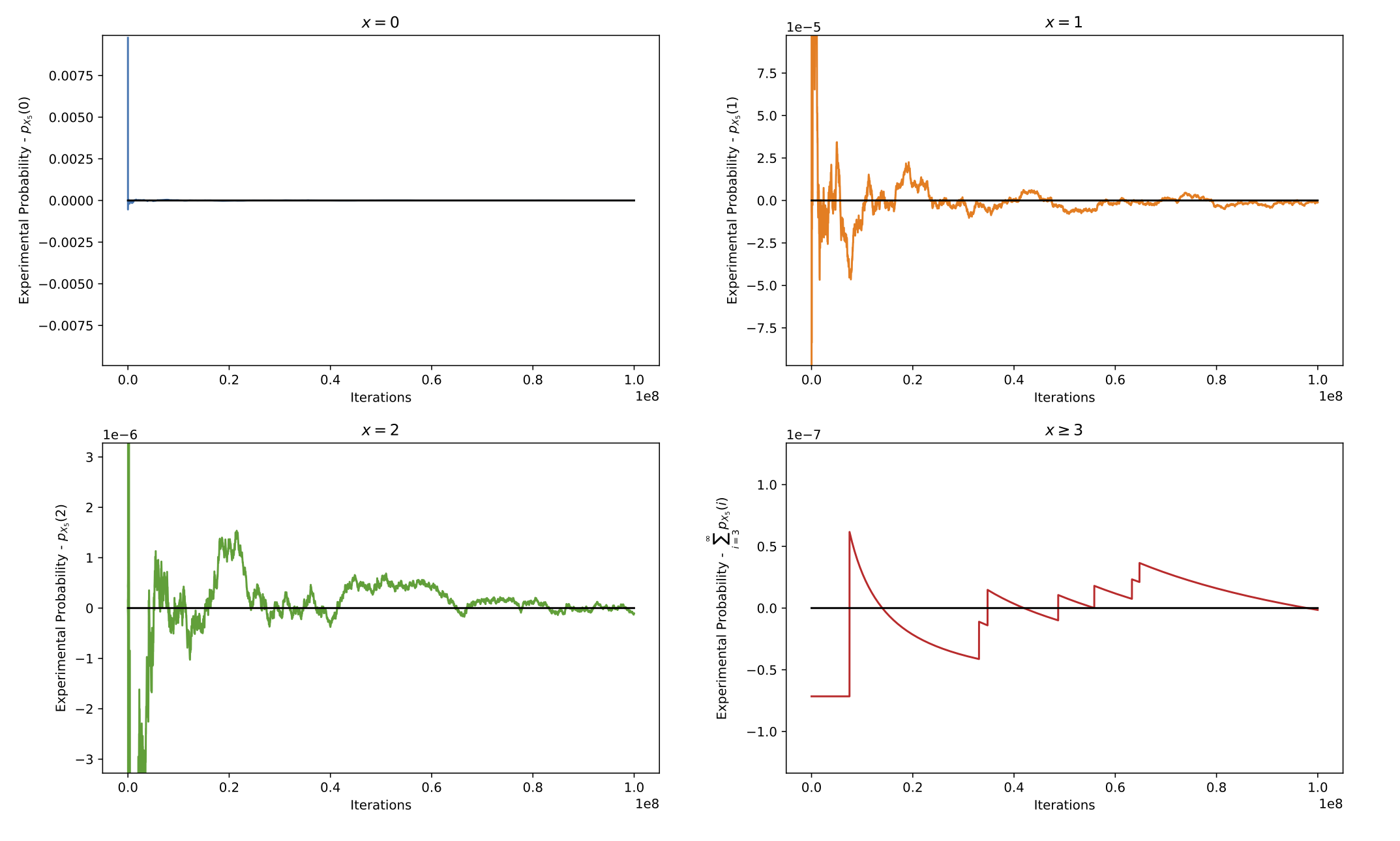}
    \caption{Differences between simulated and predicted probabilities for $p_{X_5}(x)$}
    \label{fig:mcplot5}
\end{figure}

\newpage

\section{Acknowledgments}
The author is grateful to Professor Fan Ge at the College of William \& Mary, Professor Nawaf Bou-Rabee at Rutgers University, and the mathematics department at the Maret School for their patient discussion and advice.

\bibliographystyle{plain}
\bibliography{pls_cite}

\end{document}